 \theoremstyle{plain}
 \newtheorem{thm}{Theorem}[section]
 \newtheorem*{thm*}{Theorem}
 \newtheorem*{thm1.2*}{Theorem 1.2 (for $G=\GL(V)$)}
 \newtheorem{cor}[thm]{Corollary}
 \newtheorem*{cor*}{Corollary}
 \newtheorem{lem}[thm]{Lemma}
 \newtheorem*{lem*}{Lemma}
 \newtheorem{prop}[thm]{Proposition}
 \newtheorem*{prop*}{Proposition}
 \newtheorem{fact}[thm]{Fact}
 \newtheorem*{domanda}{Question}
 \newtheorem*{conj*}{Conjecture}
 \newtheorem*{conjA}{Mann's Conjecture}
 \theoremstyle{definition}
 \newtheorem{defi}[thm]{Definition}
 \newtheorem*{oss}{Remark}
 \theoremstyle{remark}
 \newtheorem*{notazione}{Notation}
\newcommand{\Z}{\mathbb{Z}}
\newcommand{\N}{\mathbb{N}}
\newcommand{\F}{\mathbb{F}}
\newcommand{\stab}{\mathrm{stab}}
\newcommand{\cl}{\mathrm{cl}}
\newcommand{\Sym}{\mathrm{Sym}}
\newcommand{\Alt}{\mathrm{Alt}}
\newcommand{\GL}{\mathrm{GL}}
\newcommand{\PGL}{\mathrm{PGL}}
\begin{document}
 
 \begin{titlepage}
 
	\title{A closure operator on the subgroup lattice of $\GL(n,q)$ and $\PGL(n,q)$ in relation to the zeros of the M\"obius function}
	
	\author{Luca Di Gravina \thanks{The author is a member of INdAM research group for Algebraic and Geometric Structures and their Applications (\textsl{GNSAGA}). He is also a member of the research training group \textsl{GRK 2240: Algebro-Geometric Methods in Algebra, Arithmetic and Topology}, funded by DFG. The author thanks both groups for supporting this project.}}
	
	\date{}
  
\end{titlepage}

\maketitle

\begin{abstract}
	Let $\mathbb{F}_q$ be the finite field with $q$ elements and consider the $n$-dimensional $\mathbb{F}_q$-vector space $V=\mathbb{F}_q^n\,$. In this paper we define a closure operator on the subgroup lattice of the group $G = \mathrm{PGL}(V)$. Let $\mu$ denote the M\"obius function of this lattice. The aim is to use this closure operator to characterize subgroups $H$ of $G$ for which $\mu(H,G)\neq 0$. Moreover, we establish a polynomial bound on the number $c(m)$ of closed subgroups $H$ of index $m$ in $G$ for which the lattice of $H$-invariant subspaces of $V$ is isomorphic to a product of chains. This bound depends only on $m$ and not on the choice of $n$ and $q$. It is achieved by considering a similar closure operator for the subgroup lattice of $\mathrm{GL}(V)$ and the same results proven for this group. 
\end{abstract}

\smallskip

{\small{\textbf{Keywords:}} M\"obius function, closure operators, subgroup lattice, linear groups, cyclic matrices}

{\small{\textbf{2020 MSC:}}	20B25, 20D60, 05E16, 06A15}

\section{Introduction and main results}
  
In this paper, $V$ will denote the vector space $\F_q^n$ of dimension $n$ over the finite field $\F_q$ with $q$ elements. Let $\GL(V)$ and $\PGL(V)$ be, respectively, the general linear group and the projective general linear group of $V$.
We are interested in investigating some properties of the M\"obius function associated to the subgroup lattice of these groups, as described below. To do that, 
we aim to define two suitable closure operators, namely one on the subgroup lattice of $\GL(V)$ and the other one on the subgroup lattice of $\PGL(V)$. These two operators shall be strictly related to each other. Their definitions both rely on the natural permutation action of each group on the subspace lattice of $V$. We note that the action is basically the same for the two groups: the action of $\PGL(V)$ on the subspace lattice of $V$ is in fact just the faithful action induced by the action of $\GL(V)$ on the same lattice.
It will be clear how this yields a correspondence between the closed subgroups in the two subgroup lattices. In particular, closed subgroups in the subgroup lattice of $\PGL(V)$ can be obtained from the corresponding closed subgroups in $\GL(V)$ by 
passing to
the quotient of $\GL(V)$ modulo its centre. In this sense, we can equivalently work with $\GL(V)$ or $\PGL(V)$ (actually, we prefer to work first with $\GL(V)$ when computations are required).
Our results strongly depend on the structure of some sublattices of the subspace lattice of $V$, so that the relation between the subgroup lattice of each group and the subspace lattice of $V$ is central in the paper.   \\

Let $G$ be either $\GL(V)$ or $\PGL(V)$. 
Let $\mathcal{L}(G)$ be the subgroup lattice of $G$ and $\mu:\mathcal{L}(G)\times \mathcal{L}(G)\to \Z$ the M\"obius function on $\mathcal{L}(G)$. The definition of the M\"obius function is given in general for posets in the Preliminaries (see Definition \ref{def_MoebFun}). 
Some combinatorial and topological properties of this function were originally studied for a finite group by Hall in \cite{hall36} (for a historical introduction and some general results, see for instance \cite{hawkes-isaacs-oszaydin89} and \cite{kratzer-thevenaz84}). 

We say that a subgroup $K$ of $G$ is \emph{irreducible} if no non-trivial subspace of $V$ is stabilized by all elements of $K$ under the natural permutation action of $G$ on the subspace lattice of $V$; otherwise the subgroup is said to be \emph{reducible}.
Let $H$ be a subgroup of $G$ and let $\mathit{Irr}(G,H)$ be the set of irreducible subgroups of $G$ that contain $H$. For every $K\in\mathit{Irr}(G,H)$ we denote by $\widehat{\mathcal{I}}(K,H)$ the subposet of $\mathcal{L}(G)$ consisting of $H$, $K$, and all the reducible subgroups of $K$ which contain $H$. Let $\mu_{\widehat{\mathcal{I}}(K,H)}(H,K)$ be
the value in $(H,K)$ of the M\"obius function defined on $\widehat{\mathcal{I}}(K,H)$.

By applying a general result for finite lattices (Lemma \ref{MobFunIdeal} in the Preliminaries), we immediately deduce in Section \ref{section_ClOpLinProjGrps} that 
\begin{equation}\label{eq_muReducibleIdeal2}\tag{$\star$}
\mu(H,G)= \sum_{K\in\mathit{Irr}(G,H)}\mu(K,G)\cdot\mu_{\widehat{\mathcal{I}}(K,H)}(H,K)\,.
\end{equation} 
In order to characterize subgroups $H$ of $G$ with $\mu(H,G)\neq 0$, it is therefore interesting to understand for which pairs of subgroups $H, K$ of $G$ as above we could have $\mu_{\widehat{\mathcal{I}}(K,H)}(H,K)\neq0$.
For this purpose we introduce a closure operator on $\mathcal{L}(G)$, whose definition relies on the action of the group $G$ on the subspace lattice of $V$. The general definition of a closure operator on an abstract lattice will be recalled in the Preliminaries (see Definition \ref{def_ClosureOperator}) and the precise details on the closure operator we choose for $\mathcal{L}(G)$ are available in Section \ref{section_ClOpLinProjGrps}. 
Anyway, for the reader's convenience, we explain here in advance what we mean by ``$H$ closed subgroup in $K$'' in this context, as follows.

Let $H$, $K$ be two subgroups of $G$ so that 
$H\leq K$.
Considering the natural permutation action of $G$ on the subspace lattice of $V$, a subspace of $V$ is \emph{$H$-invariant} if it is stabilized by each element in $H$. 
We say that \emph{$H$ is closed in $K$} if $H$ is exactly the set of all elements in $K$ which stabilize every $H$-invariant subspace of $V$. Otherwise, if there are elements in $K\setminus H$ which stabilize every $H$-invariant subspace of $V$, we say that \emph{$H$ is not closed in $K$}. 

In relation to \eqref{eq_muReducibleIdeal2}, we are interested in the case when $K$ is an irreducible subgroup of $G$ containing $H$, i.e. $K\in\mathit{Irr}(G,H)$. One of the two main results of this paper is the following proposition.  

\begin{prop}\label{NonClosedMu0}
	Let $G$ be one of $\GL(V)$ or $\PGL(V)$, for $V=\F_q^n$.
	Let $H$ be a subgroup of $G$ and $K\in\mathit{Irr}(G,H)$.
	If $H$ is not closed in $K$, then $$\mu_{\widehat{\mathcal{I}}(K,H)}(H,K)=0\,.$$
\end{prop}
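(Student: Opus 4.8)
The plan is to read ``$H$ is closed in $K$'' as the assertion that $H$ is a fixed point of the natural closure operator attached to the action on the subspace lattice, and then to deduce the vanishing of the M\"obius function from a general principle for closure operators on finite posets. Concretely, for a subgroup $L$ with $H\le L\le K$ set
\[
\sigma(L)=\{\,g\in K : g \text{ stabilizes every } L\text{-invariant subspace of } V\,\}.
\]
Then $\sigma(H)$ is exactly the set occurring in the definition of closedness, so the hypothesis ``$H$ is not closed in $K$'' means precisely $\sigma(H)\neq H$. First I would check that $\sigma$ is a closure operator: it is extensive ($L\le\sigma(L)$, since every element of $L$ fixes the $L$-invariant subspaces), order preserving (if $L\le M$ then the $M$-invariant subspaces form a subfamily of the $L$-invariant ones, whence $\sigma(L)\le\sigma(M)$), and idempotent (because $\sigma(L)$ has exactly the same family of invariant subspaces as $L$: it fixes all of them by definition, and it contains $L$).

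The key step is that $\sigma$ restricts to a closure operator on $\widehat{\mathcal{I}}(K,H)$, i.e.\ that $\sigma(L)$ is again a member of this poset. Since $\sigma(L)$ and $L$ have the same family of invariant subspaces, $\sigma(L)$ is reducible exactly when $L$ is reducible; combined with $H\le L\le\sigma(L)\le K$ this shows that $\sigma(L)$ is $H$, $K$, or a reducible subgroup of $K$ containing $H$, hence lies in $\widehat{\mathcal{I}}(K,H)$. In particular, as $K$ is irreducible while every proper member of the poset is reducible and therefore has a reducible --- thus proper --- closure, $K$ is the \emph{unique} element of $\widehat{\mathcal{I}}(K,H)$ whose closure equals $K$. (I am here in the pertinent case, where $H$ is reducible, so that $\widehat{\mathcal{I}}(K,H)$ actually carries intermediate elements; this is the situation in which the statement feeds into $(\star)$.)

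With the two inputs $\sigma(H)\neq H$ and ``$K$ is the only $\sigma$-preimage of $K$'' in hand, I would finish by the following vanishing lemma: if $c$ is a closure operator on a finite poset $P$ with least element $\hat 0$ and greatest element $\hat 1$ and $c(\hat 0)\neq\hat 0$, then $\sum_{x:\,c(x)=y}\mu_P(\hat 0,x)=0$ for every closed $y$. Applied with $P=\widehat{\mathcal{I}}(K,H)$, $\hat 0=H$, $\hat 1=K$ and $y=K$, the left-hand side collapses by uniqueness of the preimage to the single term $\mu_{\widehat{\mathcal{I}}(K,H)}(H,K)$, which is therefore $0$. The lemma itself is short: writing $g(y)=\sum_{c(x)=y}\mu_P(\hat 0,x)$ for closed $y$, one has $\sum_{y'\le y}g(y')=\sum_{x\le y}\mu_P(\hat 0,x)=[\hat 0=y]$, using $c(x)\le y\iff x\le y$ for closed $y$; since $\hat 0$ is not closed, every closed $y$ satisfies $y\ge c(\hat 0)>\hat 0$, so the right-hand side vanishes identically, and M\"obius inversion over the poset of closed elements forces $g\equiv 0$.

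I expect the main obstacle to be exactly the well-definedness of the restricted closure operator --- that $\sigma$ sends reducible subgroups to reducible subgroups and never leaves the interval $[H,K]$ --- since this is where the interplay between the subgroup lattice and the subspace lattice, the structural theme of the paper, really enters, the crucial input being that $L$ and $\sigma(L)$ share the same invariant-subspace family. Once that and the resulting uniqueness of the preimage of $K$ are secured, the M\"obius-theoretic step is purely formal.
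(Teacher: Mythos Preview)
Your argument is correct and follows a genuinely different route from the paper. The paper invokes Theorem~\ref{thmDVDG1} (an external result from \cite{dallavolta-digravina22} expressing $-\mu_{\widehat{\mathcal{I}}(K,H)}(H,K)$ as the alternating sum $\sum_{E\in\Psi(K,H)}(-1)^{|E|}$ over subsets of $\mathcal{S}(V,H)^*$), and then observes that when $H$ is not closed in $K$ the family $\Psi(K,H)$ is the \emph{full} power set of $\mathcal{S}(V,H)^*$, so the alternating sum collapses to zero. You instead work intrinsically in the poset $\widehat{\mathcal{I}}(K,H)$: you show that $\sigma=\cl_K$ restricts to a closure operator there, that $K$ is its own unique $\sigma$-preimage, and then apply a standard Rota--Crapo vanishing principle for closure operators with non-closed bottom element. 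Your route is more self-contained --- it avoids the Euler-characteristic machinery altogether --- and isolates the purely order-theoretic mechanism behind the result; the paper's route, in exchange, ties the vanishing directly to the subspace-lattice combinatorics and makes the simplicial-complex interpretation visible. Your explicit restriction to the case of reducible $H$ is appropriate and indeed necessary: when $H$ is irreducible with $H<K$ one has $\widehat{\mathcal{I}}(K,H)=\{H,K\}$ and the M\"obius value is $-1$, so both arguments (the paper's included, since then $\mathcal{S}(V,H)^*=\emptyset$ and the power-set sum equals $1$) tacitly require $H$ to be reducible.
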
 

The basic idea here is to exploit the interplay between the subgroup lattice structure of $G$ on
the one hand, and the subspace lattice structure of $V$ on the other hand. The role of the closure operator is essential, as well as the interpretation of the number $\mu_{\widehat{\mathcal{I}}(K,H)}(H,K)$ as the Euler characteristic of a simplicial complex related to  
the lattice of $H$-invariant subspaces of $V$ (see Theorem \ref{thmDVDG1}). \\

Our motivation for studying the values $\mu(H,G)$ is the following conjecture concerning the growth of the M\"obius function on the lattice of open subgroups of positively finitely generated (PFG) profinite groups.

\begin{conjA}[Mann, \cite{mann05}]\label{MannConj} 
	Let $A$ be a PFG profinite group, $\mu$ the M\"obius function on the lattice of open subgroups of $A$, and $b_A(m)$ the number of open subgroups $Y$ in $A$ of index $m$ such that $\mu(Y,A)\neq0$. Then, for every open subgroup $Y\leq_{\,o}A$, the absolute value $|\mu(Y,A)|$ is bounded by a polynomial function in the index $|A:Y|$, and the number $b_A(m)$ grows at most polynomially in $m$. 
\end{conjA}

\begin{notazione} 
	 Mann and other authors denote by $b_m(A)$ the number of subgroups $Y$ of index $m$ in $A$ such that $\mu(Y,A)\neq0$. 
	 We prefer to denote it by $b_A(m)$ to emphasize that this is a function of $m$.
\end{notazione}

Indeed, although the problem is still open in its general setting, it was reduced by Lucchini in \cite{lucchini10} to the study of similar growth conditions for finite almost-simple groups. 
Hence, it is of particular interest for us to address the problem in a way that is independent of the finite vector space $V$ for finite almost-simple groups of the form $\PGL(V)$.

We want to remark that the behaviour of the M\"obius function of a finite group is a question of combinatorial nature of independent interest. So, the fact of considering $\GL(V)$ and finding new related results also for this group is already significant in itself.
Moreover, our results for $\PGL(V)$ are directly deduced from the corresponding ones for $\GL(V)$
through the action of these two groups on the subspace lattice of $V$.
For technical reasons, in Section \ref{section_ProductOfChains} it is more convenient for us to work mainly with $\GL(V)$ and just to observe how we reduce to $\PGL(V)$.  \\

Dealing with Mann's Conjecture, Proposition \ref{NonClosedMu0} together with \eqref{eq_muReducibleIdeal2} suggests  
studying the number of closed subgroups $H$ in $G$ of index $|G:H|=m$, for any $m\in\N$, and computing its growth in $m$. 
The conjecture motivates the interest in a polynomial bound in $m$ for this number (see Corollary \ref{newReduction-b_n(G)}).

\begin{domanda}
	Is the number of closed subgroups $H$ in $G$ of index $m$ bounded by a function $m^\alpha$, for a constant $\alpha$ which is independent of $m$ and $V$?
\end{domanda}  

Actually, we do not find a positive answer for all closed subgroups in $G$ here.
But in Theorem \ref{ClosedSbgrpsProductOfChains} we obtain a positive answer when we consider subgroups $H$ of $G$ with a special structure for the lattice of $H$-invariant subspaces of $V$, as follows. 

Let $\mathcal{S}(V,H)$ be the lattice of $H$-invariant subspaces of $V$. Then the theorem provides a polynomial bound for the number of closed subgroups $H$ in $G$ such that the corresponding $\mathcal{S}(V,H)$ is isomorphic to a product of chains. 
So, let $\mathfrak{X}(G)$ be the subset of $\mathcal{L}(G)$ whose elements are the closed subgroups $H$ in $G$ with $\mathcal{S}(V,H)\simeq\prod_{i}C_i$, for some finite chains $C_i\,$.  

\begin{thm}\label{ClosedSbgrpsProductOfChains}
	Let $G$ be one of $\GL(V)$ or $\PGL(V)$, for $V=\F_q^n$. Let
	$$c(m):=\#\{H\leq G \mid H\in\mathfrak{X}(G), \,|G:H|=m \,\}.$$
	Then there exists an absolute constant $\alpha$, which is independent of $n$ and $q$, such that
	$c(m)\leq m^{\alpha}$ for every $m\in\N$.
\end{thm}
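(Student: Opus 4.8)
The plan is to exploit the Galois connection underlying the closure operator. A closed subgroup $H$ is exactly the pointwise stabiliser $\mathrm{Stab}_G(\mathcal{S}(V,H))=\{g\in G: gW=W \text{ for all } W\in\mathcal{S}(V,H)\}$, so $H$ is completely determined by the sublattice $\mathcal{S}(V,H)$ of the subspace lattice, and conjugate subgroups correspond to $G$-conjugate sublattices. Since the number of conjugates of $H$ is $|G:N_G(H)|\le|G:H|=m$, it suffices to bound the number of $G$-conjugacy classes: writing $N(m)$ for the number of $G$-orbits of closed sublattices $\mathcal{F}\simeq\prod_iC_i$ with $|G:\mathrm{Stab}_G(\mathcal{F})|=m$, one has $c(m)\le m\cdot N(m)$, because the shape of $\mathcal{S}(V,H)$ is a complete conjugacy invariant. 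I would first dispose of $\PGL(V)$ via the correspondence set up earlier in the paper: every closed subgroup of $\GL(V)$ contains the scalars $Z$ (which fix every subspace), the invariant-subspace lattices are unaffected by passing modulo $Z$, and indices are preserved, so the count for $\PGL(V)$ is dominated by that for $G=\GL(V)$, on which I would concentrate.

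The heart of the argument is a rigidity statement: a product-of-chains configuration is determined, up to the action of $\GL(V)$, by purely combinatorial data, with no dependence on $q$. Given $\mathcal{F}\simeq\prod_{i=1}^rC_{k_i}$ realised inside the subspace lattice, I would read off, for each factor, the element $V_i\in\mathcal{F}$ that is maximal in the $i$-th chain and trivial in the others; because meets and joins in $\mathcal{F}$ are genuine intersections and sums of subspaces, the lattice identities $\sum_iV_i=V$ and $V_i\cap\sum_{j\ne i}V_j=0$ give a direct decomposition $V=\bigoplus_iV_i$, and the elements of $\mathcal{F}$ below $V_i$ form an honest flag $0<U_{i,1}<\dots<U_{i,k_i}=V_i$, with every element of $\mathcal{F}$ of the form $\bigoplus_iU_{i,c_i}$. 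Two such configurations with equal block dimensions $\dim V_i$ and equal chains of flag dimensions are carried onto each other by any $g\in\GL(V)$ matching one adapted basis to another; hence the $G$-orbit of $\mathcal{F}$ is determined exactly by its \emph{shape}, namely the multiset of compositions of the $\dim V_i$ recorded by the successive flag steps. This transitivity — and the consequent complete absence of $q$-dependence — is precisely where the product-of-chains hypothesis is indispensable: for non-distributive invariant lattices the analogous configurations carry continuous, field-dependent invariants (cross-ratio–type moduli), so the orbit count is no longer governed by a shape alone. I expect making this rigidity step fully rigorous to be the main obstacle.

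Finally I would convert the index into a bound on $n$. If $\mathcal{F}\ne\{0,V\}$ it contains a proper nonzero subspace $W$, and since $\mathrm{Stab}_G(\mathcal{F})\le\mathrm{Stab}_G(W)$,
\[
m=|G:\mathrm{Stab}_G(\mathcal{F})|\ \ge\ |G:\mathrm{Stab}_G(W)|=\binom{n}{\dim W}_q\ \ge\ \binom{n}{1}_q\ \ge\ q^{\,n-1}\ \ge\ 2^{\,n-1},
\]
so $n\le\log_2 m+1$; the excluded case $\mathcal{F}=\{0,V\}$ only yields $H=G$ at index $1$. Thus for $m\ge2$ every subgroup counted by $c(m)$ is of the first kind, whence $n\le\log_2 m+1$, and since a shape is purely combinatorial the number of shapes realisable in $V=\F_q^n$ is at most the number of multisets of compositions of total size $n$, bounded by $3^{\,n-1}$ independently of $q$. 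Therefore $N(m)\le 3^{\,n-1}\le 3^{\log_2 m}=m^{\log_2 3}$, giving $c(m)\le m\cdot N(m)\le m^{\,1+\log_2 3}$ for $m\ge2$, while $c(1)=1$. Enlarging the exponent slightly to absorb the finitely many small indices yields $c(m)\le m^{\alpha}$ for an absolute constant $\alpha$ (for instance $\alpha=3$) independent of $n$ and $q$, as claimed.
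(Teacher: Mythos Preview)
Your argument is correct and genuinely different from the paper's. The paper first treats two special cases separately (Propositions for the boolean and flag configurations), computing the index explicitly as a product of $q$-binomial factors and then invoking Kalm\'ar's bound on ordered factorisations of $m$; combining the two cases it obtains $|\mathfrak{X}_m|\le m^{12}$. You instead (i) pass to conjugacy classes, (ii) observe that $\GL(V)$ acts transitively on product-of-chains configurations of a fixed dimensional ``shape'' (this is the clean structural point, and it is indeed straightforward linear algebra once the direct-sum decomposition $V=\bigoplus_i V_i$ with internal flags is extracted from the lattice identities), (iii) bound $n\le\log_2 m+1$ via the minimal Gaussian binomial, and (iv) count shapes by the crude combinatorial estimate $3^{\,n-1}$. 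This yields the sharper exponent $\alpha=1+\log_2 3<3$, avoids Kalm\'ar entirely, and makes transparent why no $q$-dependence enters: the orbit is a purely combinatorial invariant. The paper's route, by contrast, never bounds $n$ in terms of $m$ and works directly with the arithmetic of the index; it is more computational but has the virtue of isolating the boolean and parabolic building blocks explicitly, which ties in with the cyclic-endomorphism discussion later in the section. Your remark that the transitivity step is ``the main obstacle'' undersells it: it is routine, and your proof is in fact shorter and more conceptual than the original.
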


In particular, for $G=\GL(V)$, let $\xi\in G$ be a cyclic endomorphism of $V$ (as characterized in \cite[Theorem 2.1]{neuprae00}) and let $H=\langle \xi \rangle$ be the subgroup of $G$ generated by $\xi$. In this case, we have that $\mathcal{S}(V,H)$ is isomorphic to a product of chains. Then Theorem \ref{ClosedSbgrpsProductOfChains} for $\GL(V)$ can be exploited in the following way.

For a subgroup $H$ of $\GL(V)$, let $\overline{H}\leq \GL(V)$ be the subgroup of all elements in $\GL(V)$ which stabilize every $H$-invariant subspace of $V$. This is nothing but the 
smallest closed subgroup in $\GL(V)$ that contains $H$ and it is called the
\emph{closure} of $H$ in $\GL(V)$.
Clearly, $\mathcal{S}(V,H)=\mathcal{S}(V,\overline{H})$. 
So, let $z(m)$ be the number of closed subgroups in $\GL(V)$ of index $m$  that are the closure of subgroups generated by a cyclic endomorphism. We conclude that $z(m)\leq m^\alpha\,$ for every $m\in\N$. In the final remark of Section \ref{section_ProductOfChains} we point out how this fact could be considered in view of further developments.

\subsection*{Acknowledgement}

I want to thank Francesca Dalla Volta for her careful supervision during the preparation of my Ph.D. thesis, from which this work has been extracted.

\section{Preliminaries}\label{section_Preliminaries}

In general we refer to \cite[Chapter 3]{stanley86} for all definitions and basic facts about partially ordered sets (\emph{posets}) and lattices.
For the reader's convenience, we list below just some  well-known notions of major importance for this paper and we fix some notation.  

We will deal with finite posets and we will mostly reduce to consider finite lattices. 
Let $(L,\leq)$ be a lattice. For any two elements $x,y\in L$, their join in $L$ is denoted by $x\vee y$, their meet by $x\wedge y$.
Chains are a basic example of lattices and we recall the definition. 

\begin{itemize}
	\item A poset $(C,\leq)$ is a \textbf{chain} if $x$ and $y$ are comparable for all $x,y\in C$, i.e. if for any two elements $x,y\in C$ one of either $x\leq y$ or $y\leq x$ holds. \\ Since we assume $|C|<\infty$, we say that the chain $C$ has \textbf{{length}} $\ell\,$ if $\,\ell=|C|-1\,.$ 
\end{itemize}

\noindent In Section \ref{section_ProductOfChains}, we will focus on the case of lattices that are isomorphic to some direct product of chains. 
Let $(Q,\leq)$ be another lattice.

\begin{itemize}
	\item The \textbf{direct product} of $L$ and $Q$ is the lattice $(L\times Q, \leq)$ consisting of the set $L\times Q=\{(x,y)\mid x\in L,\, y\in Q\}$  with $(x,y)\leq (x',y') \text{ in } L\times Q$ if and only if $x\leq x'$ in $L$ and $y\leq y'$ in $Q$. 

	\item $L$ and $Q$ are said to be \textbf{isomorphic} if there exists an order-preserving bijection $\varphi:L\to Q$ whose inverse is order-preserving, so that we have 
	$x\leq y$ in $L$ if and only if $\varphi(x)\leq\varphi(y)$ in $Q$.
\end{itemize} 

\noindent We recall that a lattice isomorphism $\varphi:L\to Q$ preserves the operations of join and meet. 

Let $L$ have a minimum element $\hat{0}$.
\begin{itemize}
	\item Let $u$ be an element of $L$. Then $u$ is said to be \textbf{join-irreducible} in $L$ if $u\neq\hat{0}$, and $u=x\vee y\,$ implies that either $\,u=x\,$ or $\,u=y$. \\
	The subset of all join-irreducible elements in $L$ is denoted by $\mathrm{JI}(L)$. 
\end{itemize} 
\noindent By assuming the lattice $L$ to be finite, we have that every element of $L$ can be written as the join of some join-irreducible elements in $L$. \\

The following kind of subset of $L$, which we usually regard as a subposet of $L$, is important for us (starting from Lemma \ref{MobFunIdeal} below). 

\begin{itemize}
	\item An \textbf{order ideal} of $L$   
	is a subset $I\subseteq L$ such that, for all $x\in I$ and $t\in L$, if $t\leq x$, then  $t\in I$. \\
	In particular, given a subset $A$ of $L$, the set
	$$L_{\leq A} := \{s\in P \mid s\leq a \text{ for some } a\in A\}\subseteq P$$
	is the order ideal in $L$ \textbf{generated by} $A$. 
\end{itemize}

Now we introduce the M\"obius function of a finite poset $P$. A more general, very detailed discussion about the M\"obius function of a poset $P$ and its properties can be found in \cite{stanley86}.   

\begin{defi}\label{def_MoebFun} 
Let $(P,\leq)$ be a finite poset.
The \textbf{M\"obius function} associated with $P$ is the map $\,\mu_{P}:P\times P\rightarrow\Z\,$ defined recursively for $x\leq y\,$ by 
\begin{equation*}\label{EqDef_MobiusFun1}
\mu_{P}(y,y)=1\;  \quad\text{ and }\quad\; \sum\limits_{x\leq t\leq y}{\mu_{P}(t,y)}=0 \;\text{ if }\, x<y\,,
\end{equation*}
and satisfying $\,\mu_{P}(x,y)=0$ unless $x\leq y\,$.	
\end{defi}

M\"obius function and order ideals in finite lattices are related by the following lemma.
The relation will have a central role in Section \ref{section_ClOpLinProjGrps}. 

\begin{lem}\label{MobFunIdeal}
	Let $(L,\leq)$ be a finite lattice with minimum $\hat{0}$ and maximum $\hat{1}$. Let $I\subseteq L$ be an order ideal of $L$. We define:
	\begin{itemize}
		\item $\widehat{I}:=I\cup\{\hat{1}\}$;
		\item $\widehat{I}_{<y}:=\{x\in I\mid x<y\}\cup\{y\}\;$ for every $y\in L\setminus\widehat{I}$.
	\end{itemize} 
	The sets $\widehat{I}$ and $\widehat{I}_{<y}$ are regarded as subposets of $L$. Then
	\begin{equation*}\label{eq_MobFunIdeal1}
	\mu_L(\hat{0},\hat{1})=\mu_{\widehat{I}}(\hat{0},\hat{1}) + \sum_{y\in L\setminus\widehat{I}} \mu_{\widehat{I}_{<y}}(\hat{0},y)\cdot\mu_L(y,\hat{1})\,.
	\end{equation*}
\end{lem}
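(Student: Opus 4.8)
The plan is to prove the identity by computing its right-hand side directly and showing that it collapses to $\mu_L(\hat{0},\hat{1})$. Two tools will be used throughout. First, besides the recursion in Definition~\ref{def_MoebFun}, the M\"obius function of any finite poset also satisfies the companion recursion $\sum_{x\le t\le y}\mu_P(x,t)=0$ for $x<y$; I would recall this standard equivalence at the outset and use whichever of the two forms is convenient. Second, I will exploit repeatedly the fact that $I$ is an order ideal. Throughout one may assume $\hat{0}\in I$ and $\hat{1}\notin I$: if $\hat{1}\in I$ then $I=L$ and the asserted sum is empty, while the degenerate case $I=\emptyset$ is excluded since $\mu_{\widehat{I}}(\hat{0},\hat{1})$ would otherwise be undefined.

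The first main step is to bring the two ``small'' M\"obius values into a common shape. The crucial remark is that, because $I$ is an order ideal, for every $x\in I$ the interval $[\hat{0},x]$ is entirely contained in $I$, hence it is literally the same poset whether computed in $L$, in $\widehat{I}$, or in $\widehat{I}_{<y}$; consequently $\mu_{\widehat{I}}(\hat{0},x)=\mu_{\widehat{I}_{<y}}(\hat{0},x)=\mu_L(\hat{0},x)$. Applying the companion recursion inside $\widehat{I}$ (whose elements below $\hat{1}$ are exactly those of $I$) and inside $\widehat{I}_{<y}$ (whose elements below $y$ are exactly the $x\in I$ with $x<y$) then yields the closed forms
\[
\mu_{\widehat{I}}(\hat{0},\hat{1})=-\sum_{x\in I}\mu_L(\hat{0},x),\qquad \mu_{\widehat{I}_{<y}}(\hat{0},y)=-\sum_{\substack{x\in I\\ x<y}}\mu_L(\hat{0},x).
\]

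The second step is the substitution and simplification. Writing $f(x):=\mu_L(\hat{0},x)$ and inserting both closed forms, the right-hand side of the lemma becomes $-\sum_{x\in I}f(x)\bigl[\,1+\sum_{y\in L\setminus\widehat{I},\,y>x}\mu_L(y,\hat{1})\,\bigr]$. Here I would invoke the recursion $\sum_{z\ge x}\mu_L(z,\hat{1})=0$ (valid since $x<\hat{1}$) together with the partition $L=I\sqcup(L\setminus\widehat{I})\sqcup\{\hat{1}\}$ to rewrite the bracket as $-\sum_{z\in I,\,z\ge x}\mu_L(z,\hat{1})$. After swapping the order of summation the expression becomes $\sum_{z\in I}\mu_L(z,\hat{1})\sum_{x\le z}f(x)$, and the inner sum equals $\delta_{\hat{0},z}$ by the companion recursion applied to $[\hat{0},z]$; thus only $z=\hat{0}$ survives, leaving exactly $\mu_L(\hat{0},\hat{1})$.

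I expect the main obstacle to be essentially bookkeeping: keeping straight which index ranges over $I$, over $L\setminus\widehat{I}$, or over all of $L$, and justifying carefully the two interval-preservation identities that allow sub-poset M\"obius values to be replaced by the corresponding values in $L$. The conceptual content is light once those identifications are secured, since the whole argument then reduces to the two orthogonal recursions for $\mu_L$ and, at the very end, to the single global relation $\sum_{z\in L}\mu_L(\hat{0},z)=0$.
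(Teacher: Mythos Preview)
Your argument is correct. The key observations---that for $x\in I$ the interval $[\hat{0},x]$ is identical in $L$, in $\widehat{I}$, and in each $\widehat{I}_{<y}$, and that the bracketed expression collapses via the recursion $\sum_{z\ge x}\mu_L(z,\hat{1})=0$ using the partition $L=I\sqcup(L\setminus\widehat{I})\sqcup\{\hat{1}\}$---are exactly what is needed, and the final interchange of summation followed by $\sum_{x\le z}\mu_L(\hat{0},x)=\delta_{\hat{0},z}$ is clean. The boundary cases ($\hat{1}\in I$, $I=\emptyset$) are also handled appropriately.

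As for comparison: the paper does not actually prove this lemma. It merely states that ``a direct proof of Lemma~\ref{MobFunIdeal} is given in \cite[Lemma 9.3]{godsil18}'' and moves on. Your self-contained computation is therefore more than what the paper itself supplies; it is precisely the kind of direct verification that reference is presumably pointing to.
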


A direct proof of Lemma \ref{MobFunIdeal} is given in \cite[Lemma 9.3]{godsil18}. 
We will apply Lemma \ref{MobFunIdeal} to express the  M\"obius function of finite linear groups in terms of the ideal of reducible subgroups, as it appears in Theorem \ref{muReducibleIdeal}. In that case, Theorem \ref{thmDVDG1} will provide a suitable expression of  $\mu_{\widehat{I}}(\hat{0},\hat{1})$ and $\mu_{\widehat{I}_{<y}}(\hat{0},y)$ for every $y\in L\setminus\widehat{I}$.

In Section \ref{section_ClOpLinProjGrps}, these results will represent the motivation to introduce the concept of a closure operator on a lattice.
Finally, we recall this notion, which is essential for the main results of this paper. 

\begin{defi}\label{def_ClosureOperator}
	Let $(L,\leq)$ be a lattice.
	A \textbf{closure operator} on $L$ is a map $\;\cl:L\to L$ satisfying the following three conditions: 
	\begin{enumerate}	
		\item $\;\forall\, x\in L\quad x\leq \cl(x)\,$;
		\item $\;\forall\, x,y\in L\quad x\leq y\, \Rightarrow\, \cl(x)\leq \cl(y)\,$;
		\item $\;\forall\, x\in L\quad \cl(\cl(x))=\cl(x)\,$.	
	\end{enumerate} 
	An element $x\in L$  is said to be \textbf{closed} in $L$ if $\cl(x)=x$. For every $y\in L$, the image $\cl(y)$ is called the \textbf{closure} of $y$ in $L$ (through the operator $\cl$).
\end{defi}

Let $L$ be a lattice with closure operator $\cl$. Let $u$ be an element of $L$ and $L_{\leq u}$  the order ideal generated by $\{u\}$. Then $L_{\leq u}$ is not only a subposet, but also a sublattice of $L$. 	The map $\cl_u:L_{\leq u}\to L_{\leq u}$ defined by
$$\forall x\in L_{\leq u}\quad \cl_u(x):=\cl(x)\wedge u$$
is a closure operator on $L_{\leq u}$. 

\begin{defi}\label{def_ClosOp2}
 	Let $x,u\in L$ be so that $x\leq u$. Then $\cl_u(x)$ is called the \textbf{closure of $x$ in $u$}. We say that \textbf{$x$ is closed in $u$} if
 	$\cl_u(x)=x$.
\end{defi} 

Let $G$ be either $\GL(V)$ or $\PGL(V)$ as in the Introduction. Let $H$ and $K$ be two subgroups of $G$, so that $H\leq K$. Thanks to Definition \ref{def_ClosOp2}, we are able to give a precise meaning to the expression ``$H$ is closed in $K$'' in the next section. In particular, we can do that just by defining a single closure operator on the subgroup lattice of $G$. \\

In relation to the study of the M\"obius function of groups, inspiration for the use of closure operators on the subgroup lattice comes from \cite{colombo-lucchini10}. There, closure operators on the subgroup lattices of transitive permutation groups are used to give an answer to Mann's Conjecture for the alternating and symmetric groups $\Alt(n)$ and $\Sym(n)$, with $n\geq 5$.

\section{A closure operator and proof of Proposition \ref{NonClosedMu0}}\label{section_ClOpLinProjGrps}

Let $V=\F_q^n\,$, and let $G=\GL(V)$ or $G=\PGL(V)$. As it is written in the Introduction, we consider the natural permutation action of $G$ on the subspace lattice of $V$. Let $H$ be a subgroup of $G$. 
We denote by $\mathcal{S}(V,H)$ the set of all $H$-invariant subspaces of $V$, that is the set of subspaces $W\leq V$ such that $\,W^h=W$ for every $h\in H$. 
If ordered by inclusion, $\mathcal{S}(V,H)$ 
is a sublattice of the subspace lattice of $V$, where the join operation is given by the sum of subspaces.
Then, by a \textbf{join-irreducible $H$-invariant subspace} we mean a join-irreducible element of $\mathcal{S}(V,H)$.
The set of join-irreducible subspaces in $\mathcal{S}(V,H)$ is denoted by $\mathrm{JI}(\mathcal{S}(V,H))$.

We recall that a subgroup $H$ is \textbf{irreducible} if $\mathcal{S}(V,H)=\{0,V\}$. Otherwise, if there exists some non-trivial $H$-invariant subspace, $H$ is said to be \textbf{reducible}. 
In Aschbacher's classification of maximal subgroups of finite classical groups, the class of maximal reducible subgroups is denoted by $\mathcal{C}_1$: for $G$, it consists of stabilizers of subspaces (see \cite{aschbacher84} and \cite{kleidlieb90} for a detailed description). This is the only Aschbacher class that has some relevance to this work. 
So, we will just write $\mathcal{C}$ when we refer to such a class or, more generally, to some particular families of stabilizers of subspaces of $V$ 
(as in the case of $\mathcal{C}(K,H)$, in Definition \ref{def_objects(K,H)} below).

Let $\mathcal{L}(G)$ be the subgroup lattice of $G$. We define a closure operator on  $\mathcal{L}(G)$ by using the action of $G$ on the subspace lattice of $V$, as follows.

\begin{defi}\label{defClosureOperatorGL/PGL}
	The closure operator on the subgroup lattice $\mathcal{L}(G)$ that we will consider is given by the map
	$\cl:\mathcal{L}(G)\to\mathcal{L}(G)$ such that
	$$\forall H\in\mathcal{L}(G) \quad\quad \cl(H):=\bigcap_{W\in \mathcal{S}(V,H)}\mathrm{stab}_G(W).$$
	We have that $\cl$ is a closure operator in the sense of Definition \ref{def_ClosureOperator}.
	Then $\cl(H)$ is the \textbf{closure of $H$ in $G$}. We say that \textbf{$H$ is closed in $G$} if $H=\cl(H)$. 
\end{defi}

In other words, $H$ is closed in $G$ if and only if there is no element $g\in G\setminus H$ such that $W^g=W$ for every $H$-invariant subspace $W$ of $V$.

\begin{oss}
	Let $\mathrm{JI}(\mathcal{S}(V,H))=\{W_1,\dots,W_r\}$ be the set of all join-irreducible subspaces in $\mathcal{S}(V,H)$. Then
	$$\cl(H)= \mathrm{stab}_G(W_1)\cap\dots\cap\mathrm{stab}_G(W_r).$$
	This is immediate by recalling that every subspace in $\mathcal{S}(V,H)$ is the sum of some join-irreducible subspaces of $\mathcal{S}(V,H)$.
	Hence, $H$ is closed in $G$ if and only if there is no element $g\in G\setminus H$ such that $W^g=W$ for every join-irreducible $H$-invariant subspace $W$ of $V$.
\end{oss}

Let $H$ and $K$ be two subgroups of $G$ so that $H\leq K$. As seen for Definition \ref{def_ClosOp2}, we have that the closure operator $\cl$ on $\mathcal{L}(G)$ induces a closure operator $\cl_K$ on the subgroup lattice of $K$. Thus, we can define the closure of $H$ in $K$ in the following way.   

\begin{defi}\label{defClosure_HinK} 
	Let $H$, $K$ be subgroups of $G$ so that $H\leq K$.
	The \textbf{closure of $H$ in $K$} is the subgroup
	\begin{equation*}
	\cl_K(H):=\cl(H)\cap K= \mathrm{stab}_K(W_1)\cap\dots\cap\mathrm{stab}_K(W_r)
	\end{equation*}
	where $\{W_1,\dots,W_r\}$ is the set of join-irreducible subspaces in $\mathcal{S}(V,H)$.
	So, \textbf{$H$ is closed in $K$} if $H=\cl_K(H)$. 
\end{defi}

Definition \ref{defClosure_HinK} will be applied particularly when $K$ is an irreducible subgroup of $G$. In this case, we also consider the following sets. 
 
\begin{defi}\label{def_objects(K,H)}
	Let $H$, $K$ be subgroups of $G$ so that $H\leq K$.
	Let $K$ be irreducible.  
	We define:
	\begin{itemize}
		\item $\mathcal{C}(K,H):=\{\,\mathrm{stab}_K(W)\mid 0< W< V\,,\,H\subseteq\mathrm{stab}_K(W) \}$, namely the set of stabilizers in $K$ of proper $H$-invariant subspaces of $V$;
		
		\item $\mathcal{I}(K,H):=\{ L\leq K\mid H\leq L\leq M \text{ for some } M\in \mathcal{C}(K,H) \}$, namely the order ideal generated by $\mathcal{C}(K,H)$ in the subgroup lattice of $K$;
		
		\item $\widehat{\mathcal{I}}(K,H):= \mathcal{I}(K,H)\cup\{H,K\}$.
	\end{itemize}
		The subgroups $H$ and $K$ are respectively the minimum and the maximum of $\widehat{\mathcal{I}}(K,H)$, if it is regarded as a subposet of $\mathcal{L}(G)$. 
		
		Since the group $G$ itself is irreducible, we have particularly so defined $\mathcal{C}(G,H)$, $\mathcal{I}(G,H)$, and $\widehat{\mathcal{I}}(G,H)$.
\end{defi}

Now, for a subgroup $H$ of $G$ let
$$\mathcal{L}(G)_{\geq H}:=\{F\in\mathcal{L}(G)\,\mid\,H\leq F\}$$
be the sublattice of $\mathcal{L}(G)$ consisting of all subgroups of $G$ that contain $H$. So, $\mathcal{L}(G)_{\geq H}$ is a finite lattice with minimum $H$ and maximum $G$.
In order to apply Lemma \ref{MobFunIdeal}, we consider $\mathcal{I}(G,H)$ as an ideal of $\mathcal{L}(G)_{\geq H}$. It is the ideal of reducible subgroups of $G$ that contain $H$.

For every $K\in \mathcal{L}(G)_{\geq H} \setminus \widehat{\mathcal{I}}(G,H)$, the set 
$$\widehat{\mathcal{I}}(G,H)_{<K}:=\{X\in\mathcal{I}(G,H)\,\mid\, X<K\}\cup\{K\}\,$$
coincides with $\widehat{\mathcal{I}}(K,H)$.
\begin{notazione}
	For $H\leq G$, we denote: 
	\begin{itemize}
		\item by $\mu$ the M\"obius function of $\mathcal{L}(G)_{\geq H}$;
		\item by $\mathit{Irr}(G,H)$ the set of irreducible subgroups of $G$ that contain $H$;
		\item by $\mu_{\widehat{\mathcal{I}}(K,H)}$ the M\"obius function of $\widehat{\mathcal{I}}(K,H)$, for each $K\in\mathit{Irr}(G,H)$.
	\end{itemize}
\end{notazione}
Hence, 
by Lemma \ref{MobFunIdeal} we have the following theorem, which will be used to prove Proposition \ref{prop_NonClosedMu0} below.

\begin{thm}\label{muReducibleIdeal} 
	Let $H$ be a subgroup of $G$. Then
	\begin{align}
	\mu(H,G)  & =\mu_{\widehat{\mathcal{I}}(G,H)}(H,G) + \sum_{\substack{H< K<G \\ K\notin\mathcal{I}(G,H)}}\mu(K,G)\cdot\mu_{\widehat{\mathcal{I}}(K,H)}(H,K)\, \notag \\ 
	& = \sum_{K\in\mathit{Irr}(G,H)}\mu(K,G)\cdot\mu_{\widehat{\mathcal{I}}(K,H)}(H,K)\, \label{eq_muReducibleIdeal3}
	\end{align}
\end{thm}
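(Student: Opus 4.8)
The plan is to obtain both displayed equalities as a single instance of Lemma \ref{MobFunIdeal}, applied to the finite lattice $L:=\mathcal{L}(G)_{\geq H}$ with minimum $\hat 0=H$ and maximum $\hat 1=G$. All the substantive work lies in setting up the correct dictionary between the abstract data $(L,I,\widehat I,\widehat I_{<y})$ of Lemma \ref{MobFunIdeal} and the concrete objects $\mathcal{I}(G,H)$, $\widehat{\mathcal{I}}(G,H)$, $\widehat{\mathcal{I}}(K,H)$, $\mathit{Irr}(G,H)$ attached to the action of $G$ on the subspace lattice of $V$ (Definition \ref{def_objects(K,H)}). I would first reduce to the case in which $H$ is reducible; if $H$ is irreducible then $\mathcal{I}(G,H)=\emptyset$ and the asserted identity degenerates to the tautology that Lemma \ref{MobFunIdeal} yields for the empty ideal, so there is nothing of content to prove there.

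Assume then that $H$ is reducible and set $I:=\mathcal{I}(G,H)$. The key preliminary observation is that $I$ is exactly the order ideal of \emph{reducible} subgroups of $G$ containing $H$: a subgroup $T$ with $H\leq T\leq G$ is reducible precisely when it stabilizes some $W$ with $0<W<V$, and since $H\leq T$ any such $W$ is automatically $H$-invariant, so $T\leq \stab_G(W)$ with $\stab_G(W)\in\mathcal{C}(G,H)$, i.e. $T\in\mathcal{I}(G,H)$; conversely every member of $\mathcal{I}(G,H)$ stabilizes a proper nonzero subspace and is reducible. This simultaneously shows that $\mathcal{I}(G,H)$ is downward closed in $L$, hence an order ideal, and — crucially — that the reducible subgroup $H$ itself lies in $\mathcal{I}(G,H)$. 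Consequently $\widehat I=\mathcal{I}(G,H)\cup\{G\}=\widehat{\mathcal{I}}(G,H)$, the complement $L\setminus\widehat{\mathcal{I}}(G,H)$ is exactly the set of irreducible subgroups $K$ with $H<K<G$, and for each such $K$ the identity recorded just before the statement gives $\widehat I_{<K}=\widehat{\mathcal{I}}(G,H)_{<K}=\widehat{\mathcal{I}}(K,H)$. Substituting these identifications, together with $\mu_{\widehat I}(\hat0,\hat1)=\mu_{\widehat{\mathcal{I}}(G,H)}(H,G)$ and $\mu_L(K,\hat1)=\mu(K,G)$, into Lemma \ref{MobFunIdeal} delivers the first displayed equality.

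For the second equality I would simply absorb the leading term into the summation: since $\mu(G,G)=1$, the term $\mu_{\widehat{\mathcal{I}}(G,H)}(H,G)$ is exactly the $K=G$ summand of $\sum_K\mu(K,G)\cdot\mu_{\widehat{\mathcal{I}}(K,H)}(H,K)$, and the index set $\{\,H<K<G,\ K\notin\mathcal{I}(G,H)\,\}\cup\{G\}$ is precisely $\mathit{Irr}(G,H)$, because the irreducible overgroups of the reducible subgroup $H$ are exactly the irreducible $K$ with $H<K\leq G$. I expect the main obstacle — and the point most worth stating carefully — to be this reducibility bookkeeping around the minimum: the whole argument turns on having $H\in\mathcal{I}(G,H)$, which is what forces the minimum of $\widehat I_{<K}$ to coincide with the minimum $H$ of $\widehat{\mathcal{I}}(K,H)$ and makes the identification $\widehat I_{<K}=\widehat{\mathcal{I}}(K,H)$ exact rather than off by the element $H$. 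Once this is pinned down, the theorem carries no content beyond Lemma \ref{MobFunIdeal}.
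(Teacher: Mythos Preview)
Your proof is correct and follows exactly the paper's approach: the paper simply applies Lemma \ref{MobFunIdeal} to $L=\mathcal{L}(G)_{\geq H}$ with the ideal $I=\mathcal{I}(G,H)$, records that $\widehat{\mathcal{I}}(G,H)_{<K}=\widehat{\mathcal{I}}(K,H)$ for each irreducible $K$, and then absorbs the leading term via $\mu(G,G)=1$. Your explicit verification that $H\in\mathcal{I}(G,H)$ in the reducible case (and the attendant case split) is a useful clarification that the paper leaves implicit.
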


Equation \eqref{eq_muReducibleIdeal3} in Theorem \ref{muReducibleIdeal} follows from the fact that $\mu(G,G)=1$ by definition of the M\"obius function. So, we have obtained the relation \eqref{eq_muReducibleIdeal2} of the Introduction.  

Let $K\in\mathit{Irr}(G,H)$.
The number $\mu_{\widehat{\mathcal{I}}(K,H)}(H,K)$ in \eqref{eq_muReducibleIdeal3} has been studied in connection with some topological properties of the M\"obius function. In particular, such a number can be interpreted as the Euler characteristic of a simplicial complex related to the lattice of $H$-invariant subspaces of $V$ (see \cite{dallavolta-digravina22}). 
The following result comes from this interpretation and is used directly in the proof of Proposition \ref{NonClosedMu0} below.

For a subgroup $H$ of $G$ and $K\in\mathit{Irr}(G,H)$ as above, let $\mathcal{S}(V,H)^*$ denote the set $\mathcal{S}(V,H)\setminus\{0,V\}$ of non-trivial $H$-invariant subspaces of $V$, and let	
$$\Psi(K,H):=\{E\subseteq\mathcal{S}(V,H)^*\,\mid\,\bigcap_{W\in E}\mathrm{stab}_K(W)\neq H\}\,.$$
We note that the empty set $\emptyset$ is trivially an element of $\Psi(K,H)$.

\begin{thm}[Theorem 4.5 in \cite{dallavolta-digravina22}]\label{thmDVDG1}
	Let $H$ and $K$ be subgroups of $G$ such that $K$ is irreducible and $H\leq K$.
	Then 
	\begin{equation}\label{eq_thmDVDG1}
	-\mu_{\widehat{\mathcal{I}}(K,H)}(H,K)=\sum_{E\in\Psi(K,H)}(-1)^{|E|}\,.
	\end{equation}
\end{thm}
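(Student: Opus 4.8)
The plan is to interpret both sides as reduced Euler characteristics and match them up to homotopy. First I would observe that $\Psi(K,H)$ is an abstract simplicial complex on the vertex set $\mathcal{S}(V,H)^*$: if $\bigcap_{W\in E}\mathrm{stab}_K(W)\neq H$ and $E'\subseteq E$, then $\bigcap_{W\in E'}\mathrm{stab}_K(W)\supseteq\bigcap_{W\in E}\mathrm{stab}_K(W)> H$, so $E'\in\Psi(K,H)$; moreover $\emptyset\in\Psi(K,H)$ because $\bigcap_{W\in\emptyset}\mathrm{stab}_K(W)=K\neq H$. Counting faces by cardinality gives at once $\sum_{E\in\Psi(K,H)}(-1)^{|E|}=-\widetilde{\chi}\big(\Psi(K,H)\big)$, so the statement is equivalent to the identity $\mu_{\widehat{\mathcal{I}}(K,H)}(H,K)=\widetilde{\chi}\big(\Psi(K,H)\big)$.

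Next I would bring in the poset side. By Philip Hall's theorem, $\mu_{\widehat{\mathcal{I}}(K,H)}(H,K)$ equals the reduced Euler characteristic of the order complex $\Delta(\mathcal{J})$ of the open interval $\mathcal{J}=\widehat{\mathcal{I}}(K,H)\setminus\{H,K\}=\mathcal{I}(K,H)\setminus\{H\}$, i.e. the poset of nontrivial reducible subgroups lying strictly between $H$ and $K$. The closure operator $\cl_K$ of Definition \ref{defClosure_HinK} restricts to a closure operator on $\mathcal{J}$: for $L\in\mathcal{J}$ the subgroup $\cl_K(L)$ is again reducible (it stabilizes every $L$-invariant subspace), lies strictly above $H$ and strictly below $K$, and hence stays in $\mathcal{J}$. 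Since a monotone, extensive, idempotent self-map of a finite poset is homotopic to the identity, $\Delta(\mathcal{J})$ is homotopy equivalent to the order complex of the subposet $\mathcal{J}^{c}$ of closed elements, which are precisely the intersections $\bigcap_{W\in S}\mathrm{stab}_K(W)$ that lie strictly between $H$ and $K$.

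Finally I would compare $\Delta(\mathcal{J}^{c})$ with $\Psi(K,H)$ via the Nerve Lemma. For each vertex $W$ of $\Psi(K,H)$, that is each $W\in\mathcal{S}(V,H)^*$ with $\mathrm{stab}_K(W)>H$, set $F_W=\{J\in\mathcal{J}^{c}\mid J\leq \mathrm{stab}_K(W)\}$. Every $J\in\mathcal{J}^{c}$ is reducible and hence stabilizes some such $W$, so the $F_W$ cover $\mathcal{J}^{c}$; and for $E\subseteq\mathcal{S}(V,H)^*$ one checks that $\bigcap_{W\in E}F_W=\{J\in\mathcal{J}^{c}\mid J\leq M_E\}$, where $M_E:=\bigcap_{W\in E}\mathrm{stab}_K(W)$, is nonempty exactly when $M_E\neq H$, in which case $M_E$ is itself closed and reducible, so $M_E\in\mathcal{J}^{c}$ is the maximum of the intersection and the latter is contractible. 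Thus the cover satisfies the hypotheses of the Nerve Lemma and its nerve is exactly $\Psi(K,H)$, so $\Delta(\mathcal{J}^{c})\simeq\Psi(K,H)$. Chaining the homotopy equivalences yields $\mu_{\widehat{\mathcal{I}}(K,H)}(H,K)=\widetilde{\chi}(\Delta(\mathcal{J}))=\widetilde{\chi}(\Delta(\mathcal{J}^{c}))=\widetilde{\chi}(\Psi(K,H))$, which is the claimed equality.

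I expect the main obstacle to be the bookkeeping in this last step: verifying that intersections of stabilizers are again closed subgroups lying in $\mathcal{J}^{c}$ (so that each nonempty intersection of cover members has a maximum and is contractible), and correctly discarding the subspaces $W$ with $\mathrm{stab}_K(W)=H$, which are not vertices of $\Psi(K,H)$, so that the nerve matches $\Psi(K,H)$ on the nose. A purely combinatorial alternative would be to apply Rota's crosscut theorem to the lattice $\widehat{\mathcal{I}}(K,H)$ with the coatoms as crosscut and then show that the resulting alternating sum over the maximal stabilizers agrees with the sum over all of $\mathcal{S}(V,H)^*$; but establishing that invariance seems no easier than the nerve computation, so I would favour the topological route, which also matches the Euler-characteristic interpretation emphasized just before the statement.
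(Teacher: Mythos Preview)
The paper does not contain a proof of this statement: it is quoted as Theorem~4.5 of the external reference \cite{dallavolta-digravina22}, and the only argument given here is the Remark that transfers the result from $\GL(V)$ to $\PGL(V)$ via the projection $\pi$. So there is no in-paper proof to compare your proposal against.

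That said, your argument is a correct and self-contained proof along the topological lines the paper alludes to. The three steps are all sound: (i) $\Psi(K,H)$ is a simplicial complex with $\sum_{E}(-1)^{|E|}=-\widetilde\chi(\Psi(K,H))$; (ii) the map $\cl_K$ restricts to a closure operator on $\mathcal{J}=\widehat{\mathcal{I}}(K,H)\setminus\{H,K\}$ because for reducible $L$ with $H<L<K$ the subgroup $\cl_K(L)$ is again reducible and proper in $K$, so Quillen's fiber/closure argument gives $\Delta(\mathcal{J})\simeq\Delta(\mathcal{J}^c)$; (iii) the cover of $\mathcal{J}^c$ by the down-sets $F_W$ is good, since each nonempty intersection has the closed subgroup $M_E=\bigcap_{W\in E}\stab_K(W)$ as a cone point, and its nerve is $\Psi(K,H)$. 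The bookkeeping you flag---that $M_E$ is itself closed (hence lies in $\mathcal{J}^c$ whenever $M_E>H$), and that subspaces $W$ with $\stab_K(W)=H$ are simply absent from both the nerve and from $\Psi(K,H)$---is exactly right and causes no trouble. The only tacit assumption is $H\neq K$, needed for $\emptyset\in\Psi(K,H)$; this matches the paper's own parenthetical remark that $\emptyset$ is ``trivially'' in $\Psi(K,H)$ and is harmless in the intended applications.
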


\begin{oss} 
	Theorem \ref{thmDVDG1} and its proof are given in \cite{dallavolta-digravina22} for $G=\GL(V)$. We observe here that the result holds true also for $G=\PGL(V)$. In fact,
	let $\pi:\GL(V)\to\PGL(V)$ be the canonical projection onto the quotient and let $H_1,\,H_2$ be any two subgroups of $\PGL(V)$. Then, $H_1\leq H_2$ if and only if $\pi^{-1}(H_1)\leq\pi^{-1}(H_2)$. Now, let $H$ and $K$ be subgroups of $G=\PGL(V)$ as in the statement of Theorem \ref{thmDVDG1}. This implies that $\pi^{-1}(K)$ is an irreducible subgroup of $\GL(V)$. So, we have that $\widehat{\mathcal{I}}(K,H)$ and $\widehat{\mathcal{I}}(\pi^{-1}(K),\pi^{-1}(H))$ are isomorphic as lattices. Therefore 
	$$\mu_{\widehat{\mathcal{I}}(K,H)}(H,K)=\mu_{\widehat{\mathcal{I}}(\pi^{-1}(K),\,\pi^{-1}(H))}(\pi^{-1}(H),\pi^{-1}(K)).$$
	Moreover, $\mathcal{S}(V,H)=\mathcal{S}(V,\pi^{-1}(H))$, and hence $E\in\Psi(K,H)$ if and only if $E\in\Psi(\pi^{-1}(K),\pi^{-1}(H))$. Thus, equation \eqref{eq_thmDVDG1} in the theorem follows from the correspondent equation for the subgroups $\pi^{-1}(H)$ and $\pi^{-1}(K)$ of $\GL(V)$.
\end{oss}

So, we can now prove Proposition \ref{NonClosedMu0}. \\

\noindent \textbf{Proof of Proposition \ref{NonClosedMu0}.}
	Let $H$ and $K$ be subgroups of $G$ such that $K$ is irreducible and $H\leq K$.
	We assume that $H$ is not closed in $K$.
	This means that there exists an element $g\in K\setminus H$ such that $W^g=W$ for every join-irreducible $H$-invariant subspace $W$ of $V$.
	Let $W_1,\dots,W_r$ be the join-irreducible elements of $\mathcal{S}(V,H)$.  
	Then $$H\subsetneqq \bigcap_{i=1}^r\mathrm{stab}_K(W_i)\subseteq \bigcap_{W\in E}\mathrm{stab}_K(W)\,$$
	for every non-empty subset $E$ of $\mathcal{S}(V,H)^*\,$. 
	By definition of $\Psi(K,H)$, this implies that $E\in\Psi(K,H)$ for every $E\subseteq\mathcal{S}(V,H)^*\,$ (including $E=\emptyset$). 
	Hence, $\Psi(K,H)$ is the power set of $\mathcal{S}(V,H)^*$ under the assumption that $H$ is not closed in $K$. So, we have that
	$$0=\sum_{E\in\Psi(K,H)}(-1)^{|E|}=-\mu_{\widehat{\mathcal{I}}(K,H)}(H,K)\,,$$
	where the second equality comes from Theorem \ref{thmDVDG1}. This concludes the proof. \qed \\

\subsection*{Some consequences}

Let $\mu$ be the M\"obius function on the subgroup lattice of $G$ and let $H$ be a subgroup of $G$. By Theorem \ref{muReducibleIdeal}, the value $\mu(H,G)$ is characterized through the numbers $\mu(K,G)$ and $\mu_{\widehat{\mathcal{I}}(K,H)}(H,K)$ related to each irreducible subgroup $K\in\mathit{Irr}(G,H)$. For each of these subgroups $K$, information on the number $\mu_{\widehat{\mathcal{I}}(K,H)}(H,K)$ can be obtained by looking at the subspace lattice of $V$ (Theorem \ref{thmDVDG1}). This interplay between the subgroup lattice of $G$ and the subspace lattice of $V$ can be exploited to further characterize subgroups $H$ of $G$ with $\mu(H,G)\neq 0$, as in the following Proposition \ref{prop_NonClosedMu0}. It is basically an application of Proposition \ref{NonClosedMu0}, and therefore an application of the chosen closure operator on the subgroup lattice of $G$. 

\begin{prop}\label{prop_NonClosedMu0} 
	Let $H$ be a subgroup of $G$. If $\mu(H,G)\neq0$, then there exist a subgroup $K\in\mathit{Irr}(G,H)$, with $\mu(K,G)\neq0$, and a closed subgroup $Y$ in $G$ such that $H=K\cap Y$.
\end{prop}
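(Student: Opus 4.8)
The plan is to read off the required $K$ and $Y$ directly from the expansion \eqref{eq_muReducibleIdeal3}, using Proposition \ref{NonClosedMu0} as the key structural input. First I would observe that the hypothesis $\mu(H,G)\neq 0$ forces at least one summand on the right-hand side of \eqref{eq_muReducibleIdeal3} to be nonzero, since otherwise the whole sum would vanish. Hence there exists some $K\in\mathit{Irr}(G,H)$ for which both factors are nonzero, that is, $\mu(K,G)\neq 0$ and $\mu_{\widehat{\mathcal{I}}(K,H)}(H,K)\neq 0$. This $K$ already furnishes the irreducible subgroup with the demanded property $\mu(K,G)\neq 0$.

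Next I would apply the contrapositive of Proposition \ref{NonClosedMu0}. Since $K$ is irreducible, contains $H$, and satisfies $\mu_{\widehat{\mathcal{I}}(K,H)}(H,K)\neq 0$, the proposition rules out the possibility that $H$ is not closed in $K$. Therefore $H$ is closed in $K$, i.e. $\cl_K(H)=H$. Unwinding this through Definition \ref{defClosure_HinK}, closedness of $H$ in $K$ means precisely that $H=\cl(H)\cap K$.

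Finally I would set $Y:=\cl(H)$, the closure of $H$ in $G$ in the sense of Definition \ref{defClosureOperatorGL/PGL}. By the idempotency axiom (3) of Definition \ref{def_ClosureOperator}, we have $\cl(Y)=\cl(\cl(H))=\cl(H)=Y$, so $Y$ is a closed subgroup of $G$. Substituting into the identity from the previous step yields $H=\cl(H)\cap K=K\cap Y$, which is exactly the decomposition asserted in the statement.

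Since each step is a direct invocation of an already-established result, I do not anticipate a genuine obstacle. The only point demanding care is purely definitional: one must correctly recognize that the conclusion ``$H$ is closed in $K$'' delivered by Proposition \ref{NonClosedMu0} translates, via Definition \ref{defClosure_HinK}, into the intersection formula $H=\cl(H)\cap K$, which is precisely what lets the closure $\cl(H)$ play the role of the closed subgroup $Y$. Keeping the distinction between ``closed in $K$'' and ``closed in $G$'' straight is the main thing to watch.
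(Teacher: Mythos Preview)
Your proposal is correct and follows essentially the same argument as the paper: both extract $K$ from a nonzero summand in \eqref{eq_muReducibleIdeal3}, invoke Proposition \ref{NonClosedMu0} to conclude that $H$ is closed in $K$, and then take $Y=\cl(H)$ to obtain $H=K\cap Y$. The paper writes out the chain $H=\cl_K(H)=\bigcap_{W\in\mathcal{S}(V,H)}\mathrm{stab}_K(W)=K\cap\cl(H)$ explicitly, whereas you appeal directly to Definition \ref{defClosure_HinK} and the idempotency of $\cl$, but this is the same reasoning.
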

\begin{proof}
	Let $H\leq G$ such that $\mu(H,G)\neq0$. By Theorem \ref{muReducibleIdeal} we know that 
	$$\mu(H,G)=\sum_{K\in\mathit{Irr}(G,H)}\mu(K,G)\cdot\mu_{\widehat{\mathcal{I}}(K,H)}(H,K)\,.$$
	Thus,
	$\mu(H,G)\neq0$ implies that there exists a subgroup $K\in\mathit{Irr}(G,H)$ such that
	$$\mu(K,G)\cdot\mu_{\widehat{\mathcal{I}}(K,H)}(H,K)\neq0.$$
	Then, for such a subgroup $K$ we have both $\mu(K,G)\neq0$ and
	\begin{equation}\label{diseqProp3.6}
		\mu_{\widehat{\mathcal{I}}(K,H)}(H,K)\neq0.
	\end{equation} 
	By Proposition \ref{NonClosedMu0}, we conclude from \eqref{diseqProp3.6} that $H$ is closed in $K$. 
	Then there exists a closed subgroup $Y$ in $G$ such that $H=K\cap Y$. Indeed
	\begin{align*}
	H=\cl_K(H)=\bigcap_{W\in \mathcal{S}(V,H)}\mathrm{stab}_K(W)= K\cap\bigcap_{W\in \mathcal{S}(V,H)}\mathrm{stab}_G(W)=K\cap\cl(H)\,.
	\end{align*}
\end{proof}

The following Corollary \ref{newReduction-b_n(G)} is an immediate consequence of Proposition \ref{prop_NonClosedMu0}. It concerns the number $b_G(m)$ of subgroups $H$ in $G$ of index $|G:H|=m$ such that $\mu(H,G)\neq0$. We recall that the growth of $b_G(m)$ in the index $m$ is of interest for Mann's Conjecture about the M\"obius function of finitely generated profinite groups. 

\begin{cor}\label{newReduction-b_n(G)}
	Let $G$ be one of $\GL(V)$ or $\PGL(V)$, for $V=\F_q^n$.
	Let
	$$b_G(m):=\#\{H\leq G\,\mid\,|G:H|=m \;\text{ and }\; \mu(H,G)\neq0\}.$$ 
	Then there exists an absolute constant $\beta$, which is independent of $n$ and $q$, such that 
	$b_G(m)\leq m^\beta$ for every $m\in\N$,
	if the two following conditions are satisfied:
	\begin{itemize}
		\item[(1)] the number
		$$\#\{K\in\mathit{Irr}(G,H)\,\mid\,\mu(K,G)\neq0\}$$ 
		is bounded by $|G:H|^{\widetilde{\beta}_1}$ for an absolute constant $\widetilde{\beta}_1$ independent of $n$, $q$, and $H\leq G$;
		\item[(2)] the number of closed subgroups in $G$ of index $m$
		is bounded by $m^{\widetilde{\beta}_2}$, for an absolute constant $\widetilde{\beta}_2$ independent of $n$ and $q$. 
	\end{itemize} 
\end{cor}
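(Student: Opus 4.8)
The plan is to turn the abstract factorization of Proposition \ref{prop_NonClosedMu0} into a counting argument, using hypothesis (2) to control the closed factor and hypothesis (1) to control the irreducible factor.

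First I would fix $m\in\N$ and consider a subgroup $H\le G$ with $|G:H|=m$ and $\mu(H,G)\neq0$. By Proposition \ref{prop_NonClosedMu0} (and its proof, which identifies the closed subgroup as $\cl(H)$) there is an irreducible $K\in\mathit{Irr}(G,H)$ with $\mu(K,G)\neq0$ such that $H=K\cap\cl(H)$. Sending $H$ to the pair $(K,\cl(H))$ is injective, since $H$ is recovered as the intersection of its two coordinates. Because $H\le K$ and $H\le\cl(H)$, both indices $|G:K|$ and $|G:\cl(H)|$ divide $m$, so in particular both are at most $m$. Thus $b_G(m)$ is bounded by the number of admissible pairs $(K,Y)$ with $K$ irreducible, $\mu(K,G)\neq0$, $Y$ closed, and $K\cap Y$ a subgroup of index $m$ satisfying $\mu(K\cap Y,G)\neq0$.

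Next I would organise the count by the closed coordinate. By hypothesis (2) the number of closed subgroups of $G$ of index $d$ is at most $d^{\widetilde{\beta}_2}$, and summing over the divisors $d$ of $m$ (there are at most $m$ of them) yields at most $m^{\widetilde{\beta}_2+1}$ closed subgroups $Y$ with $|G:Y|$ dividing $m$. It then remains to bound, for each such fixed $Y$, the number of counted subgroups $H$ whose closure equals $Y$. The useful structural observation is that every $H$ with $\cl(H)=Y$ has the same lattice of invariant subspaces as $Y$, namely $\mathcal{S}(V,H)=\mathcal{S}(V,\cl(H))=\mathcal{S}(V,Y)$; moreover, for fixed $Y$ the assignment $H\mapsto K$ is injective, because $H=K\cap Y$ recovers $H$ from $K$. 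I would then invoke hypothesis (1) to bound the number of these irreducible subgroups $K$ by $m^{\widetilde{\beta}_1}$, which would give $b_G(m)\le m^{\widetilde{\beta}_1+\widetilde{\beta}_2+1}$, so that $\beta=\widetilde{\beta}_1+\widetilde{\beta}_2+1$ works; it is independent of $n$ and $q$ precisely because the constants in (1) and (2) are.

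The hard part is exactly this last fibre bound. Hypothesis (1) controls, for a \emph{single fixed} subgroup, the number of irreducible overgroups with nonzero M\"obius value, whereas what is needed is a bound on the number of distinct $H$ (equivalently, of the distinct irreducible subgroups $K=K_H$) sharing a prescribed closure $Y$. The extreme case $Y=G$, in which the subgroups $H$ are themselves irreducible and $K_H=H$, shows that one cannot avoid genuinely counting irreducible subgroups with $\mu\neq0$ of a given index; here the coincidence $\mathcal{S}(V,H)=\mathcal{S}(V,Y)$ for all $H$ in the fibre is what localises the problem, since each such $H$ acts without extra invariant subspaces on the sections cut out by the join-irreducibles of $\mathcal{S}(V,Y)$, and it is through this localisation that hypothesis (1) should be brought to bear. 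Making this reduction precise, so that the per-subgroup bound of (1) delivers a uniform bound over the whole fibre, is the step I expect to require the most care.
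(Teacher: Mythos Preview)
The paper does not supply a proof of this corollary: it is asserted to be ``an immediate consequence of Proposition~\ref{prop_NonClosedMu0}'' and left at that. Your framework---sending each $H$ with $\mu(H,G)\neq 0$ to the pair $(K,\cl(H))$ furnished by Proposition~\ref{prop_NonClosedMu0}, observing that $H=K\cap\cl(H)$ makes this assignment injective, and then bounding the two coordinates separately via hypotheses (1) and (2)---is precisely the natural unpacking of that claim, and your bookkeeping on the closed coordinate (at most $m^{\widetilde\beta_2+1}$ closed subgroups of index dividing $m$) is correct.

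You have, however, put your finger on a genuine issue, and you are right not to wave it away. Hypothesis~(1), read literally, bounds for a \emph{fixed} $H$ the number of irreducible overgroups $K\ge H$ with $\mu(K,G)\neq 0$; it does not bound the number of irreducible subgroups of a given index with nonzero M\"obius value, nor does it directly bound the size of a fibre over a fixed closed $Y$. Your extreme case $Y=G$ makes this sharp: that fibre consists precisely of the irreducible $H$ of index $m$ with $\mu(H,G)\neq 0$, and these need not share any common subgroup of controlled index to which (1) could be applied once and for all. The structural observation that every $H$ in a fibre satisfies $\mathcal{S}(V,H)=\mathcal{S}(V,Y)$ is correct but does not by itself convert (1) into the required fibre bound. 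In short, the step you flag as requiring the most care does not obviously follow from (1) as written, and the paper does not indicate how to close it; the surrounding prose (``the investigation on the polynomial growth of $b_G(m)$ could be restricted to the irreducible subgroups'') suggests the corollary is intended as a heuristic reduction of the problem to irreducible and closed subgroups rather than as a fully worked inequality.
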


So, Corollary \ref{newReduction-b_n(G)} says that the investigation on the polynomial growth of $b_G(m)$ could be restricted to the irreducible subgroups $K$ of $G$ with $\mu(K,G)\neq 0$. 
This is possible if we are able to verify the polynomial growth in $m\in\N$ of the number of closed subgroups in $G$ of index $m$.

The next section, with the proof of Theorem \ref{ClosedSbgrpsProductOfChains}, points in this direction. The aim is to exploit some particular structures of sublattices (products of chains) in the subspace lattice of $V$ in order to get information on the number of related closed subgroups in $G$. The result obtained is connected with condition \textit{(2)} of Corollary \ref{newReduction-b_n(G)}. We also suggest extending this approach to a more general count of closed subgroups in $G$.

\section{Proof of Theorem \ref{ClosedSbgrpsProductOfChains} and cyclic endomorphisms}\label{section_ProductOfChains}

In this section we work with $G=\GL(V)$, for $V=\F_q^n\,$.
Let $c(m)$ be the number of closed subgroups $H$ in $G$ of index $m$, for which the lattice $\mathcal{S}(V,H)$ is isomorphic to a product of chains.
As stated in Theorem \ref{ClosedSbgrpsProductOfChains}, we prove that there is a polynomial bound on $c(m)$, i.e. $c(m)\leq m^\alpha$ for an absolute constant $\alpha$ independent of $n$ and $q$.

\begin{oss}
	The choice of restricting to $\GL(V)$ is only technical. The same result can be deduced for $\PGL(V)$, as follows. By Definition \ref{defClosureOperatorGL/PGL}, the centre $Z(\GL(V))$ is contained in every subgroup that is closed in $\GL(V)$. So, it is immediate to obtain Theorem \ref{ClosedSbgrpsProductOfChains} for  $\PGL(V)=\GL(V)/Z(\GL(V))$: 
	let $\pi:\GL(V)\to\PGL(V)$ be the canonical projection and let $Y$ be a subgroup of $\PGL(V)$. Then $Y$ is closed in $\PGL(V)$ if and only if $\pi^{-1}(Y)$ is closed in $\GL(V)$, and moreover
	$|\PGL(V):Y|=|\GL(V):\pi^{-1}(Y)|$.
\end{oss}

Before starting with the proof of Theorem \ref{ClosedSbgrpsProductOfChains} for $G$, we note that there is a typical example of a closed subgroup $H$ in $G$ for which the lattice $\mathcal{S}(V,H)$ is isomorphic to a product of chains.
Such an example is given by the closure of the subgroup generated by a cyclic endomorphism of $V$ in $G$.
We recall this in Proposition \ref{prop_CyclicMatrix-ProductOfChains}, observing that $\mathcal{S}(V,H)=\mathcal{S}(V,\cl(H))$ for every $H\leq G$, by Definition \ref{defClosureOperatorGL/PGL} of the operator $\cl$ on the subgroup lattice of $G$. 

For a reference about cyclic endomorphisms of $V$ in $\GL(V)$ and their possible characterization, see \cite{neuprae00} (actually, since they are regarded as elements of the matrix group $\GL(n,q)$, they are called \emph{cyclic matrices} in \cite{neuprae00}). 
We only recall what we need here, also in view of a final remark at the end of our paper. 
For our purposes, by a cyclic endomorphism of $V$ in $G$ we mean the following.

\begin{defi}\label{CyclicMatrix_def1}
	An element $\xi\in G$ is said to be a \textbf{cyclic endomorphism of $V$} if the characteristic polynomial of $\xi$ is equal to its minimal polynomial.
\end{defi}

For every element $g\in G$ and the correspondent generated subgroup $\langle g\rangle$, clearly $\mathcal{S}(V,\langle g\rangle)$ coincides with the set of subspaces that are $g$-invariant. Thus, we will simply write $\mathcal{S}(V,g)$ to denote it. 

\begin{fact}\label{Corollary_IsoProdOfChains}
	Let $\xi\in G$
	be a cyclic endomorphism of $V$ with minimal polynomial $m_\xi(t)\in\F_q[t]$. We denote by $\mathcal{D}(m_\xi)$ the lattice of all monic divisors of $m_\xi(t)$ in $\F_q[t]$. Then there is an isomorphism of lattices
	$$\mathcal{D}(m_\xi)\simeq \mathcal{S}(V,\xi)$$
	given by the map $\,f(t) \mapsto \ker(f(\xi))$.
\end{fact}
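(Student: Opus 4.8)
The plan is to translate the statement into module theory and reduce it to the ideal structure of a principal ideal domain. First I would regard $V$ as a module over the polynomial ring $\F_q[t]$, with $t$ acting as $\xi$; the $\xi$-invariant subspaces of $V$ are then precisely the $\F_q[t]$-submodules, so that $\mathcal{S}(V,\xi)$ \emph{is} the submodule lattice of this module. Since $\xi$ is a cyclic endomorphism, its characteristic polynomial equals $m_\xi$, so $\xi$ admits a cyclic vector $v$ (as in the characterization recalled from \cite{neuprae00}); equivalently $\{v,\xi v,\dots,\xi^{n-1}v\}$ is a basis of $V$. The evaluation map $\F_q[t]\to V$, $g\mapsto g(\xi)v$, is then a surjective $\F_q[t]$-module homomorphism whose kernel is the annihilator $(m_\xi)$, yielding a module isomorphism $\F_q[t]/(m_\xi)\xrightarrow{\sim}V$ under which multiplication by $t$ corresponds to $\xi$.

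Next I would exploit that $\F_q[t]$ is a principal ideal domain. Under the isomorphism above, submodules of $V$ correspond to ideals of $\F_q[t]$ containing $(m_\xi)$, and each such ideal is $(f)$ for a unique monic divisor $f$ of $m_\xi$; this already gives a bijection between $\mathcal{D}(m_\xi)$ and $\mathcal{S}(V,\xi)$. To match it with the stated map $f\mapsto\ker(f(\xi))$, I would compute images and kernels of $f(\xi)$ for $f\mid m_\xi$: the submodule $(f)/(m_\xi)$ corresponds to $\operatorname{im}(f(\xi))=f(\xi)V$, and the relation $f(\xi)\cdot(m_\xi/f)(\xi)=m_\xi(\xi)=0$ forces $\operatorname{im}\!\big((m_\xi/f)(\xi)\big)\subseteq\ker(f(\xi))$. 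A rank–nullity count in the cyclic module gives $\dim\ker(f(\xi))=\deg f$ and $\dim\operatorname{im}\!\big((m_\xi/f)(\xi)\big)=n-\deg(m_\xi/f)=\deg f$, so the two coincide: $\ker(f(\xi))=\operatorname{im}\!\big((m_\xi/f)(\xi)\big)$. Hence, as $f$ runs over the monic divisors of $m_\xi$, so does $\ker(f(\xi))$, and the map $f\mapsto\ker(f(\xi))$ realizes the bijection above.

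Finally I would verify that this bijection is a lattice isomorphism. Order preservation is immediate: if $f\mid g$ then $g(\xi)=e(\xi)f(\xi)$ for some $e\in\F_q[t]$, whence $\ker(f(\xi))\subseteq\ker(g(\xi))$. For order reflection I would use the identity $\ker(f(\xi))\cap\ker(g(\xi))=\ker\!\big(\gcd(f,g)(\xi)\big)$, which follows from a B\'ezout relation $\gcd(f,g)=af+bg$; combined with the dimension formula $\dim\ker(h(\xi))=\deg h$ it shows that $\ker(f(\xi))\subseteq\ker(g(\xi))$ forces $\gcd(f,g)=f$, i.e. $f\mid g$. A bijective order isomorphism between finite lattices automatically respects meets and joins, so this suffices; one can moreover check directly that $\gcd$ and $\operatorname{lcm}$ go to intersection and sum of subspaces, the join case using the cyclic dimension formula. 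The main obstacle, and the only point where cyclicity is truly essential, is exactly this dimension formula: for a non-cyclic $\xi$ the invariant-subspace lattice is strictly larger than $\mathcal{D}(m_\xi)$, and it is the identification $V\cong\F_q[t]/(m_\xi)$ that collapses it onto the divisor lattice. Everything else is routine PID and rank–nullity bookkeeping.
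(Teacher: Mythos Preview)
Your argument is correct. The paper itself does not supply a proof of this fact; it simply records it as known and refers the reader to Brickman--Fillmore \cite{brifil67}. What you have written is essentially the standard module-theoretic proof one would find behind that citation: identify $V$ with the cyclic $\F_q[t]$-module $\F_q[t]/(m_\xi)$ via a cyclic vector, use that $\F_q[t]$ is a PID to parametrize submodules by monic divisors of $m_\xi$, and then match this parametrization with the explicit map $f\mapsto\ker(f(\xi))$ via the dimension count $\dim\ker(f(\xi))=\deg f$. Your verification of order reflection through the B\'ezout identity for $\gcd$ is clean and correct, and your closing remark that cyclicity is precisely what makes the invariant-subspace lattice no larger than $\mathcal{D}(m_\xi)$ is the right diagnosis of where the hypothesis enters. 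There is nothing to add; your write-up is more detailed than what the paper provides.
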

  
For a proof of Fact \ref{Corollary_IsoProdOfChains}, see for example \cite{brifil67}. 
Now, the structure of the lattice $\mathcal{D}(m_\xi)$ is well known: it depends only on the prime factorization over $\F_q[t]$ of the minimal polynomial $m_\xi(t)$ of $\xi$. 
Let 
$$m_\xi(t)=f_1(t)^{\omega_1}\cdot\ldots\cdot f_r(t)^{\omega_r}$$
where $f_1(t),\dots,f_r(t)\in\F_q[t]$ are monic and irreducible, $\omega_1,\dots,\omega_r\in\N$. Then  
$$\mathcal{D}(m_\xi)\simeq \prod_{i=1}^r C(\omega_i) = C(\omega_1)\times\dots\times C(\omega_r)$$
where $\prod_{i=1}^r C(\omega_i)$ denotes the direct product of $r$ chains $C(\omega_1),\dots,C(\omega_r)$ of length, respectively, $\omega_1,\dots,\omega_r$.
So, for a cyclic endomorphism $\xi\in G$, we have the following immediate description of the lattice of $\xi$-invariant subspaces of $V$.

\begin{prop}\label{prop_CyclicMatrix-ProductOfChains}
	If $\xi\in G$
	is a cyclic endomorphism of $V$, then the lattice $\mathcal{S}(V,\xi)$ of $\xi$-invariant subspaces of $V$ is isomorphic to a product of chains.
\end{prop}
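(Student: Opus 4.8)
The plan is to combine Fact \ref{Corollary_IsoProdOfChains}, which already identifies $\mathcal{S}(V,\xi)$ with the divisor lattice $\mathcal{D}(m_\xi)$ via the map $f(t)\mapsto\ker(f(\xi))$, with the well-known structure of divisor lattices in a unique factorization domain. Since that map is a lattice isomorphism $\mathcal{D}(m_\xi)\to\mathcal{S}(V,\xi)$, it suffices to show that $\mathcal{D}(m_\xi)$ itself is isomorphic to a product of chains; the composition of the two isomorphisms then yields the claim.

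First I would factor the minimal polynomial into monic irreducibles over $\F_q[t]$, writing $m_\xi(t)=f_1(t)^{\omega_1}\cdots f_r(t)^{\omega_r}$ with the $f_i$ pairwise distinct. Because $\F_q[t]$ is a unique factorization domain, every monic divisor $f$ of $m_\xi$ admits a unique expression $f=\prod_{i=1}^r f_i^{a_i}$ with $0\leq a_i\leq\omega_i$ for each $i$. This sets up a bijection between the monic divisors of $m_\xi$ and the set of integer tuples $(a_1,\dots,a_r)$ satisfying $0\leq a_i\leq\omega_i\,$, i.e.\ the underlying set of $\prod_{i=1}^r C(\omega_i)$, where $C(\omega_i)$ is the chain $0\leq 1\leq\dots\leq\omega_i$ of length $\omega_i$.

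Next I would verify that this bijection is an isomorphism of lattices and not merely of posets. Divisibility of monic polynomials corresponds exactly to componentwise inequality of the exponent tuples, so the map is order-preserving with order-preserving inverse. Moreover, under this correspondence the meet in $\mathcal{D}(m_\xi)$, given by the greatest common divisor, translates to the componentwise minimum of exponents, while the join, given by the least common multiple, translates to the componentwise maximum; these are precisely the meet and join operations in the product of chains. Hence the bijection respects $\wedge$ and $\vee$, establishing $\mathcal{D}(m_\xi)\simeq\prod_{i=1}^r C(\omega_i)$, exactly as recorded in the discussion preceding the statement.

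There is no serious obstacle here: the proposition is essentially the translation, through Fact \ref{Corollary_IsoProdOfChains}, of the elementary fact that the lattice of divisors of an element in a UFD decomposes as a product of chains indexed by its distinct prime factors. The only point meriting a moment's care is checking that the exponent correspondence respects meets and joins rather than just the order relation, but this is immediate from the behaviour of gcd and lcm on the exponents of a fixed irreducible factor.
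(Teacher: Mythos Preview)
Your proposal is correct and follows essentially the same approach as the paper: the paper derives the proposition directly from Fact~\ref{Corollary_IsoProdOfChains} together with the observation that $\mathcal{D}(m_\xi)\simeq\prod_{i=1}^r C(\omega_i)$ via the prime factorization $m_\xi(t)=f_1(t)^{\omega_1}\cdots f_r(t)^{\omega_r}$, which is exactly what you do. If anything, your write-up supplies a little more detail on why the exponent-tuple bijection respects meets and joins than the paper does, but the argument is the same.
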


Now we can focus on the proof of
Theorem \ref{ClosedSbgrpsProductOfChains}.
It is achieved by studying separately two preliminary cases.
\begin{description}
	\item[Case I.]  We consider the closed subgroups $H$ in $G$ for which the lattice $\mathcal{S}(V,H)$ of  $H$-invariant subspaces of $V$ is boolean (Proposition \ref{ClosedSbgrpsBoolean}).
	\item[Case II.] We consider the closed subgroups $H$ in $G$ for which $\mathcal{S}(V,H)$ is a flag in $V$ (Proposition \ref{ClosedSbgrpsFlag}).
\end{description}
These two conditions can be finally combined to complete the proof of Theorem \ref{ClosedSbgrpsProductOfChains}. We introduce here some useful notation.

\begin{notazione}
	For every $m\in \N$, we will consider the following three sets of closed subgroups in $G$.
	\begin{itemize}
		\item $\mathfrak{B}_m\,$ 
		is the set of closed subgroups $H$ in $G$ such that $|G:H|=m$ and the lattice $\mathcal{S}(V,H)$ is boolean.
		\item $\mathfrak{F}_m\,$
		is the set of closed subgroups $H$ in $G$ such that $|G:H|=m$ and the lattice $\mathcal{S}(V,H)$ is a flag.
		\item $\mathfrak{X}_m\,$ 
		is the set of closed subgroups $H$ in $G$ such that $|G:H|=m$ and the lattice $\mathcal{S}(V,H)$ is isomorphic to a product of chains.
	\end{itemize}
	With this notation, proving Theorem \ref{ClosedSbgrpsProductOfChains} is equivalent to showing that there exists a constant $\alpha$, independent of $n$ and $q$, such that
	$|\mathfrak{X}_m|\leq m^\alpha$ for all $m\in\N$.
\end{notazione}

\subsection*{Proof in Case I}

A description and characterization of boolean lattices can be found in  \cite[Chapter 3]{stanley86}. For our purposes, we only observe that $\mathcal{S}(V,H)$ is boolean if and only if $V=\bigoplus_{i=1}^r W_i\,$ for $\{W_1,\dots,W_r\}=\mathrm{JI}(\mathcal{S}(V,H))$. This is equivalent to saying that $\mathcal{S}(V,H)$ is isomorphic to a product of chains of length 1.

Before proving Proposition \ref{ClosedSbgrpsBoolean}, 
we recall that, for integers $0\leq x\leq n$, the \textbf{$q$-binomial coefficient} is
\begin{equation}\label{q-binomial}
\binom{n}{x}_q=\frac{[n]_q!}{[x]_q!\,[n-x]_q!}
\end{equation}
and represents the number of $x$-dimensional subspaces of $\F_q^n\,$. For the definitions of such $q$-analogues of binomial coefficient and factorial, see for example \cite[Chapter 1]{stanley86}. For what follows, it is useful to write
\begin{equation}\label{CardGLnq}
	|\GL(n,q)|= [n]_q!(q-1)^n\,q^{\binom{n}{2}}\,.
\end{equation}

\begin{oss}
	Let $W_1$ and $W_2$ be two vector subspaces of $V=\F_q^n$ such that $V=W_1\oplus W_2\,$. The dimensions of $W_1$ and $W_2$ are denoted, respectively, by $x_1$ and $x_2$, so that $x_1+x_2=n$.
	Then
	\begin{align}\label{eq_IndexBool}
	\frac{|\GL(V)|}{|\GL(W_1)|\cdot|\GL(W_2)|} &= \frac{[n]_q!(q-1)^n\,q^{\binom{n}{2}}}{[x_1]_q!(q-1)^{x_1}\,q^{\binom{x_1}{2}}[x_2]_q!(q-1)^{x_2}\,q^{\binom{x_2}{2}}} \nonumber\\ \nonumber \\ 
	&= \binom{n}{x_1}_q  q^{\binom{n}{2}-\binom{x_1}{2}-\binom{x_2}{2}} \;=\; \binom{n}{x_1}_q q^{x_1x_2}\,. 
	\end{align}
\end{oss}

\begin{prop}\label{ClosedSbgrpsBoolean}
	Let $V=\F_q^n$ and $G=\mathrm{GL}(V)$. 
	Then there exists an absolute constant $\alpha_1$, which is independent of $n$ and $q$, such that $|\mathfrak{B}_m|\leq m^{\alpha_1}\,$ for every $m\in\N$.
\end{prop}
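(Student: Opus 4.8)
The plan is to count $\mathfrak{B}_m$ by identifying its elements with direct-sum decompositions of $V$. First I would record the relevant bijection. As observed just before the statement, a closed subgroup $H$ has $\mathcal{S}(V,H)$ boolean precisely when $V=\bigoplus_{i=1}^r W_i$, where $\{W_1,\dots,W_r\}=\mathrm{JI}(\mathcal{S}(V,H))$. Conversely, to any decomposition $V=\bigoplus_{i=1}^r W_i$ into nonzero subspaces one associates the block-diagonal subgroup $H=\bigcap_{i=1}^r\stab_G(W_i)\cong\prod_{i=1}^r\GL(W_i)$, which is closed (it equals its own closure by Definition \ref{defClosureOperatorGL/PGL}) and has $\mathcal{S}(V,H)$ boolean with join-irreducibles exactly the $W_i$. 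Since $H=\cl(H)=\bigcap_i\stab_G(W_i)$ determines and is determined by the unordered set $\{W_1,\dots,W_r\}$, counting $\mathfrak{B}_m$ amounts to counting unordered decompositions $V=\bigoplus_i W_i$ with $|G:H|=m$.

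Next I would compute the index in terms of the dimensions $x_i=\dim W_i$. Extending the computation \eqref{eq_IndexBool} from two to $r$ factors via \eqref{CardGLnq} gives
$$m=|G:H|=\frac{|\GL(V)|}{\prod_{i=1}^r|\GL(W_i)|}=\binom{n}{x_1,\dots,x_r}_q\, q^{\sum_{i<j}x_ix_j},$$
where $\binom{n}{x_1,\dots,x_r}_q$ is the $q$-multinomial coefficient and $\sum_{i<j}x_ix_j=\binom{n}{2}-\sum_i\binom{x_i}{2}$. The decisive observation is that for any nontrivial decomposition (i.e. $r\ge 2$) one has $\sum_{i<j}x_ix_j\ge x_r(n-x_r)\ge n-1$, since $\sum_i x_i=n$ and $t(n-t)\ge n-1$ for $1\le t\le n-1$. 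Hence $m\ge q^{\,n-1}\ge 2^{\,n-1}$, which forces $n\le 1+\log_2 m$. This logarithmic bound on $n$ is the heart of the argument and is exactly what will make the final exponent independent of $n$ and $q$.

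Finally I would split the count over dimension multisets. For a fixed partition $\lambda=(x_1,\dots,x_r)$ of $n$, the group $\GL(V)$ acts transitively on the ordered tuples $(U_1,\dots,U_r)$ with $V=\bigoplus_i U_i$ and $\dim U_i=x_i$, with stabilizer $\prod_i\GL(U_i)$; by orbit--stabilizer the number of such tuples is exactly the index $m$, so the number of unordered decompositions with dimension multiset $\lambda$ is at most $m$. On the other hand, every contributing $\lambda$ is a partition of an integer $n\le 1+\log_2 m$, and the number of such partitions is at most $p(n)\le 2^{\,n-1}\le m$. Multiplying the two bounds yields $|\mathfrak{B}_m|\le m\cdot m=m^2$ (with the case $m=1$ accounting only for $H=G$), so $\alpha_1=2$ works.

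The main obstacle --- and the only genuinely nontrivial step --- is the estimate $\sum_{i<j}x_ix_j\ge n-1$ leading to $n\le 1+\log_2 m$: without it, the number of partitions of $n$ could not be controlled uniformly in $n$. Once this is in place, the two elementary counts (partitions of a logarithmically small integer, and an orbit--stabilizer count per partition) combine at once; sharpening the crude bounds would only lower the constant, but $\alpha_1=2$ already suffices.
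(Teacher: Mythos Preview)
Your proof is correct and takes a genuinely different route from the paper's. The paper writes $m$ as an ordered product $v_1\cdots v_r$ of $q$-binomial factors together with a power of $q$, invokes Kalm\'ar's bound $m^2$ on the number of ordered factorizations of $m$, observes that each factor $v_i$ pins down $x_i$ up to at most two choices, and uses $2^{r-1}\le m$ to get at most $m^3$ conjugacy classes and hence $|\mathfrak{B}_m|\le m^4$. You instead exploit the single inequality $\sum_{i<j}x_ix_j\ge n-1$ to force $n\le 1+\log_2 m$, then bound the number of dimension multisets by $p(n)\le 2^{n-1}\le m$ and the number of decompositions per multiset by the orbit--stabilizer count $m$, obtaining $|\mathfrak{B}_m|\le m^2$. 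Your argument is more elementary---it avoids the Kalm\'ar factorization estimate entirely---and yields a sharper constant; the paper's factorization method, on the other hand, is set up so as to run in parallel with the flag case (Proposition~\ref{ClosedSbgrpsFlag}), where no analogous logarithmic bound on the flag length is available from a single $q$-power factor. One small overclaim: it is not true in general that every decomposition $V=\bigoplus_i W_i$ yields an $H$ with $\mathcal{S}(V,H)$ boolean (e.g.\ $q=2$ with all $\dim W_i=1$ gives $H=1$), but you only need the injection $\mathfrak{B}_m\hookrightarrow\{\text{decompositions}\}$ via $H\mapsto\mathrm{JI}(\mathcal{S}(V,H))$, and that direction is fine.
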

\begin{proof}
	Let $m\in\N$ be fixed and consider $H\in\mathfrak{B}_m$.
	Since $H$ is closed in $G$,
	$$H=\bigcap_{i=1}^r\mathrm{stab}_G(W_i)$$
	where $\{W_1,\dots,W_r\}=\mathrm{JI}(\mathcal{S}(V,H))$ is the set of  join-irreducible subspaces in $\mathcal{S}(V,H)$.
	Since $\mathcal{S}(V,H)$ is boolean, we have
	$V=\bigoplus_{i=1}^r W_i\,$,
	and consequently
	$$H=\bigcap_{i=1}^r\mathrm{stab}_G(W_i)\simeq\mathrm{GL}(W_{1})\oplus\dots\oplus\mathrm{GL}(W_{r}).$$
	So,
	$|H|=|\mathrm{GL}(W_{1})|\cdot\ldots\cdot|\mathrm{GL}(W_{r})|.$ 
	Let $x_i:=\dim W_i\,$ for all $i=1,\dots,r$. 
	We can inductively use \eqref{eq_IndexBool} in the above remark to obtain that
	\begin{align*}
	m= |G:H| & = \frac{|\mathrm{GL}(V)|}{|\mathrm{GL}(W_{1})|\cdot\ldots\cdot|\mathrm{GL}(W_{r})|} \\ 
	\\
	& = \binom{n}{x_1}_q \cdot \binom{n-x_1}{x_2}_q \cdot \ldots \cdot \binom{n-x_1-\ldots-x_{r-2}}{x_{r-1}}_q \cdot q^\epsilon 
	\end{align*}
	where the exponent 
	$$\epsilon:=\epsilon(n,x_1,\dots,x_{r-1})= x_1(n-x_1)+x_2(n-x_1-x_2)+\ldots+x_{r-1}x_r$$ 
	depends only on $n,x_1,\dots,x_{r-1}$. 
	Let 
	$$v_1=\binom{n}{x_1}_q , \quad v_i=\binom{n-x_1-\ldots-x_{i-1}}{x_i}_q\; \text{ for } i=2,\dots,r-1, \quad v_r=q^{\epsilon}$$
	so that $m=v_1\cdot\ldots\cdot v_r$.
	As in the proof of \cite[Lemma 2.3]{colombo-lucchini10}, by \cite{kalmar31}, the number of such ordered factorizations of $m$ is at most $m^2$. If we fix the factorization $m=v_1\cdot\ldots\cdot v_r$, then there are at most two possible values for $x_1$ to get $v_1$. If $x_1$ is fixed, then there are at most two possible values for $x_2$ to get $v_2$, and so on up to $x_{r-1}$, to get $v_{r-1}$. The remaining $v_r$ in the factorization must be equal to $q^\epsilon$, and $x_r$ is determined by the previous $x_i$, with $i=1,\dots,r-1$. So, for every fixed ordered factorization, we have at most $2^{r-1}$ possibilities, and $2^{r-1}\leq m$. Then there are at most $m^3$ choices of $x_1,..., x_r$ for the given index $m$.
	Hence, there are at most $m^3$ conjugacy classes of closed subgroups $H$ in $G$ of index $m$, for which $\mathcal{S}(V,H)$ is boolean. Since each of these subgroups has at most $m$ conjugates, we obtain that $|\mathfrak{B}_m|\leq m^4$. 
\end{proof}

\subsection*{Proof in Case II}

First, we need to recall the notion of a flag in the vector space $V$. A \textbf{flag} $f$ in $V$ is a sequence $(0,W_1,\dots,W_k,V)$ of subspaces of $V$ such that $\,0<W_1<\dots<W_k<V$.
Clearly, $f$ can be regarded as a chain of subspaces from $0$ to $V$ in the subspace lattice of $V$. The set of all flags in $V$ is denoted by $Fl_V$, and $G=\GL(V)$ acts on $Fl_V$  in the following obvious way:
$$\forall\, g\in G \;\quad\; (0,W_1,\dots,W_k,V)^g = (0,W_1^g,\dots, W_k^g,V).$$
Let $f=(0,W_1,\dots,W_k,V)$ be a flag in $V$.
With some abuse of notation, the stabilizer in $G$ of $f$ is
$$\mathrm{stab}_G(f)=\{g\in G \mid W_i^g=W_i  \;\text{ for all }i=1,\dots,k\}=\bigcap_{i=1}^k \stab_G(W_i)\,,$$
where $\stab_G(W_i)$ denotes the stabilizer in $G$ of each subspace $W_i$ with respect to the usual action of $G$ on the subspace lattice of $V$. 
The stabilizer in $G$ of a flag in $V$ is also called a \textbf{parabolic subgroup} of $\mathrm{GL}(V)$ (see \cite[Chapter 3]{wilson09}).

\begin{oss}
	Looking at the closure operator $\cl$ of Definition \ref{defClosureOperatorGL/PGL},
	a subgroup $H$ of $G$ is parabolic if and only if $H$ is closed in $G$ and $\mathcal{S}(V,H)$ is a flag in $V$. 
	So, the number $|\mathfrak{F}_m|$ is exactly the number of parabolic subgroups of $\GL(V)$ of index $m$.
\end{oss} 

Finally, we say that $f=(0,W_1,\dots,W_k,V)$ is a \textbf{flag of type $(d_1,\dots,d_k)$} if $d_i=\dim(W_i)$ for each $i=1,\dots,k$.
We set 
$$Fl_{V}(d_1,\dots,d_k):=\{f \text{ flag in } V \mid f \text{ is of type } (d_1,\dots,d_k) \}$$
for every sequence of positive integers $d_1,\dots,d_k$ such that $0<d_1<\dots<d_k<n$. The group $G$ acts transitively on $Fl_V(d_1,\dots,d_k)$. Consequently, the stabilizers in $G$ of flags in $Fl_V(d_1,\dots,d_k)$ are conjugate to each other.

\begin{prop}\label{ClosedSbgrpsFlag}
	Let $V=\F_q^n$ and $G=\mathrm{GL}(V)$. 
	Then there exists an absolute constant $\alpha_2$, which is independent of $n$ and $q$, such that $|\mathfrak{F}_m|\leq m^{\alpha_2}\,$ for every $m\in\N$.
\end{prop}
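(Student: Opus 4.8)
The plan is to follow the counting strategy of Case~I (Proposition~\ref{ClosedSbgrpsBoolean}), adapted to the flag structure. By the remark preceding the statement, $|\mathfrak{F}_m|$ is precisely the number of parabolic subgroups of $G$ of index $m$, and every such subgroup is the stabiliser $H=\stab_G(f)$ of a flag $f=(0,W_1,\dots,W_k,V)$ of some type $(d_1,\dots,d_k)$ with $0<d_1<\dots<d_k<n$. Since a parabolic $\stab_G(f)$ has $\mathcal{S}(V,H)$ equal to the chain $f$, its type is an invariant of $H$; and since $G$ acts transitively on $Fl_V(d_1,\dots,d_k)$, the parabolics of a fixed type form a single conjugacy class. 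Thus the parabolics of index $m$ partition into one conjugacy class per flag type of index $m$, and each class has size $[G:N_G(H)]\leq[G:H]=m$. Hence $|\mathfrak{F}_m|\leq m\cdot\#\{\text{flag types of index }m\}$, and it remains to bound the number of flag types.

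To do this, I first record the index: by transitivity it equals the number of flags of the given type, which counting each successive subspace gives
\[
m=|G:H|=\binom{n}{d_1}_q\binom{n-d_1}{d_2-d_1}_q\cdots\binom{n-d_{k-1}}{d_k-d_{k-1}}_q,
\]
a product of $k$ nontrivial $q$-binomial coefficients, each at least $2$ (equivalently the $q$-multinomial obtained from \eqref{CardGLnq} after the powers of $q$ cancel, leaving no residual factor). Writing $v_1=\binom{n}{d_1}_q$ and $v_i=\binom{n-d_{i-1}}{d_i-d_{i-1}}_q$ for $i\geq2$, we have $m=v_1\cdots v_k$, and by \cite{kalmar31} (exactly as in the proof of Proposition~\ref{ClosedSbgrpsBoolean}) the number of ordered factorisations of $m$ is at most $m^2$. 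Now fix one factorisation and reconstruct the flag type sequentially. For fixed $N$ the map $x\mapsto\binom{N}{x}_q$ is symmetric and strictly unimodal in $x$, so $\binom{N}{x}_q=v$ has at most two solutions; since $n$ is fixed, $v_1$ determines $d_1$ up to two values, and once $d_1$ is chosen $n-d_1$ is known so $v_2$ determines $d_2$ up to two values, and so on, giving at most $2^k$ flag types per factorisation. As each $v_i\geq2$ we have $2^k\leq v_1\cdots v_k=m$, so each factorisation yields at most $m$ flag types and there are at most $m^2\cdot m=m^3$ flag types of index $m$.

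Combining the two estimates gives $|\mathfrak{F}_m|\leq m^3\cdot m=m^4$, so $\alpha_2=4$ works. The only ingredient not already present in Case~I is the symmetry and strict unimodality of $x\mapsto\binom{N}{x}_q$, which is what lets the sequential reconstruction pin down each dimension up to two choices; I expect this to be the one point requiring any care, the remainder being a direct transcription of the Case~I counting, and indeed slightly cleaner here since the flag index is a pure $q$-multinomial with no leftover power of $q$ (so, unlike in Case~I, there is no need to split off a separate factor $q^\epsilon$).
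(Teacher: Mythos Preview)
Your proof is correct and follows essentially the same approach as the paper: compute the index as a product of $q$-binomial coefficients, bound the number of ordered factorisations by $m^2$ via \cite{kalmar31}, use unimodality to get at most $2^k\leq m$ types per factorisation, and multiply by $m$ for the conjugates to obtain $|\mathfrak{F}_m|\leq m^4$. The only cosmetic difference is that the paper writes the $q$-multinomial top-down as $\binom{n}{x_k}_q\binom{x_k}{x_{k-1}}_q\cdots\binom{x_2}{x_1}_q$ rather than your bottom-up form, which is immaterial.
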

\begin{proof}
	Let $m\in\N$ be fixed and consider $H\in\mathfrak{F}_m$.
	Then 
	$H$ is the stabilizer in $G$ of a flag $f$ in $V$ and $|G:H|=m$.  
	Let $f$ be the flag
	$0<W_1<\dots<W_k<V,$
	so that
	$$H=\mathrm{stab}_G(f)=\bigcap_{i=1}^k\mathrm{stab}_G(W_i).$$
	For each $i=1,\dots,k$, let $x_i$ be the dimension of the subspace $W_i\,$. 
	We consider the set $Fl_{V}(x_1,\dots,x_k)$ of flags of type $(x_1,\dots,x_k)$ in $V$. 
	Since the action of $G$ on $Fl_{V}(x_1,\dots,x_k)$ is transitive, we have that
	\begin{align*}
	|Fl_{V}(x_1,\dots,x_k)|= \frac{|\mathrm{GL}(V)|}{|\mathrm{stab}_G(f)|}=|G:H|\,.
	\end{align*}
	But we can also compute $|Fl_{V}(x_1,\dots,x_k)|$ as follows: 
	\begin{align*}
	|Fl_{V}(x_1,\dots,x_k)| & = |Fl_{V}(x_k)|\cdot|Fl_{W_k}(x_1,\dots,x_{k-1})| \\ \\
	& = |Fl_{V}(x_k)|\cdot|Fl_{W_k}(x_{k-1})|\cdot\ldots\cdot|Fl_{W_2}(x_{1})| \\ \\
	& = \binom{n}{x_k}_q \cdot \binom{x_k}{x_{k-1}}_q \cdot \ldots \cdot \binom{x_2}{x_{1}}_q\,.
	\end{align*}
	Then
	\begin{align*}
	m= |G:H| = |Fl_{V}(x_1,\dots,x_k)| = \binom{n}{x_k}_q \cdot \binom{x_k}{x_{k-1}}_q \cdot \ldots \cdot \binom{x_2}{x_{1}}_q\,.
	\end{align*}
	Let 
	$$v_k=\binom{n}{x_k}_q , \quad v_i=\binom{x_{i+1}}{x_i}_q\; \text{ for } i=1,\dots,k-1$$
	so that $m=v_k\cdot\ldots\cdot v_1$.
	As in the proof of Proposition \ref{ClosedSbgrpsBoolean}, we know that the number of such ordered factorizations of $m$ is at most $m^2$ by \cite{kalmar31}. Moreover, assume that the factorization $m=v_k\cdot\ldots\cdot v_1$ is fixed. Then, for each $i=1,\dots,k$ there are at most two possible values of $x_i$ for which we get $v_i$.
	So, for every fixed ordered factorization, we have at most $2^{k}$ possibilities, and $2^{k}\leq m$. Then there are at most $m^3$ choices of $x_1,...,x_k$ for the given index $m$.
	Hence, there are at most $m^3$ conjugacy classes of closed subgroups $H$ in $G$ of index $m$, for which $\mathcal{S}(V,H)$ is a flag. Each of these subgroups has at most $m$ conjugates, so $|\mathfrak{F}_m|\leq m^4$.
\end{proof}

\subsection*{Proof of Theorem \ref{ClosedSbgrpsProductOfChains}}

Here we combine Proposition \ref{ClosedSbgrpsBoolean} and Proposition \ref{ClosedSbgrpsFlag} in order to prove Theorem \ref{ClosedSbgrpsProductOfChains}. For the reader's convenience, we state the theorem again (for $G=\GL(V)$), using the notation we have introduced in this section. 
As remarked above, the analogous result for $\PGL(V)$ is an immediate consequence.

\begin{thm1.2*}
	Let $V=\F_q^n$ and $G=\mathrm{GL}(V)$. 
	Then there exists an absolute constant $\alpha$, independent of $n$ and $q$, such that $|\mathfrak{X}_m|\leq m^{\alpha}\,$ for every $m\in\N$.
\end{thm1.2*}
\begin{proof}
	Fix $m\in\N$ and consider $H\in\mathfrak{X}_m$. 
	So $H$ is closed in $G$, $|G:H|=m$, and 
	$\mathcal{S}(V,H)$ is isomorphic to the product of $r$ chains $\gamma_1,\dots,\gamma_r$. 
	For each $i=1,\dots,r$ there exists a vector subspace $W_{k_i}^{(i)}$ of $V$ such that the chain 
	$\gamma_i$ corresponds to a flag in $W_{k_i}^{(i)}$ of the form
	$$0=W_0^{(i)}<W_1^{(i)}<\dots<W_{k_i-1}^{(i)}<W_{k_i}^{(i)}\,.$$
	We identify this flag with the chain $\gamma_i$, so that $W_{k_i}^{(i)}$ represents the maximum of $\gamma_i$. In this way,
	every subspace $T\in\mathcal{S}(V,H)$ is identified with an $r$-tuple of subspaces $(W_{j_1}^{(1)},\dots,W_{j_r}^{(r)})\in\prod_{i=1}^r \gamma_i$, so that $j_i\in\{0,\dots,k_i\}$ and 
	$T=\bigoplus_{i=1}^r W_{j_i}^{(i)}\,.$
	In particular,
	\begin{equation}\label{eq_ProdCha1}
	V=\bigoplus_{i=1}^r W_{k_i}^{(i)}\,.
	\end{equation}
	Then every join-irreducible element of $\mathcal{S}(V,H)$ is one of the subspaces $W_j^{(i)}$, with $i\in\{1,\dots,r\}$ and $j\in\{1,\dots,k_i\}$. It is identified with an $r$-tuple of the form 
	$(0,\dots,0,W_{j}^{(i)},0,\dots,0)\in\prod_{s=1}^r \gamma_s$ such that $W_{j}^{(i)}\in\gamma_i$. 
	Since $H$ is closed in $G$, 
	the subgroup $H$ is uniquely determined by the join-irreducible elements of $\mathcal{S}(V,H)$, as follows:
	\begin{equation}
	H= \bigcap_{W\in \mathrm{JI}(\mathcal{S}(V,H))} \mathrm{stab}_G(W) = \bigcap_{i=1}^r\,\bigcap_{j=1}^{k_i}\,\mathrm{stab}_G(W_{j}^{(i)})\,.
	\end{equation} 
	By the proof of Proposition \ref{ClosedSbgrpsBoolean}, for each divisor $\overline{m}$ of $m$, we have at most $\overline{m}^{\,\alpha_1}$ (with $\alpha_1=4$) decompositions of $V$ as in \eqref{eq_ProdCha1} with
	\begin{equation*}
	\overline{m}=\frac{|\GL(V)|}{|\bigcap_{i=1}^r \stab_G(W_{k_i}^{(i)})|}\,.
	\end{equation*}
	The number of divisors of $m$ is clearly bounded by $m$.
	Thus, there are at most $m^5$ possible decompositions of $V$ as in \eqref{eq_ProdCha1} corresponding to a subgroup $H\in\mathfrak{X}_m$.
	Now choose such a decomposition
	$V=\bigoplus_{i=1}^r W_{k_i}^{(i)}$.
	For each $i=1,\dots,r$, we can consider the relative chain $C_i:=\gamma_i\cup\{V\}$, represented by a flag in $V$ of the form
	\begin{equation*}
	0=W_0^{(i)}<W_1^{(i)}<\dots<W_{k_i-1}^{(i)}<W_{k_i}^{(i)}<V\,.
	\end{equation*}
	Therefore,
	\begin{equation*}
	\frac{|\GL(V)|}{|\stab_G(C_i)|}=\binom{n}{x_{k_i}}_q \cdot \binom{x_{k_i}}{x_{k_i-1}}_q \cdot \ldots \cdot \binom{x_2}{x_{1}}_q\,
	\end{equation*}
	where $x_j:=\dim(W_j^{(i)})$ for each $j=1,\dots,k_i\,$.
	We observe that 
	$$\binom{n}{x_{k_i}}_q=\frac{|\GL(V)|}{|\stab_G(W_{k_i}^{(i)})|}$$
	has been fixed with the choice of the decomposition $V=\bigoplus_{i=1}^r W_{k_i}^{(i)}$. Then we set
	\begin{align}\label{eq_yi}
	y_i &:=\frac{|\GL(V)|}{|\stab_G(C_i)|}\cdot\frac{|\stab_G(W_{k_i}^{(i)})|}{|\GL(V)|}  =\binom{x_{k_i}}{x_{k_i-1}}_q \cdot \ldots \cdot \binom{x_2}{x_{1}}_q\,. 
	\end{align}
	As in the proof of Proposition \ref{ClosedSbgrpsFlag}, by \eqref{eq_yi} there are at most $y_i^{\alpha_2}$ (with $\alpha_2=4$) ways to choose the chain $C_i$, for all $i=1,\dots,r$. 
	We note that 
	\begin{equation}\label{eq_ProdCha2}
	y_i=\binom{x_{k_i}}{x_{k_i-1}}_q \cdot \ldots \cdot \binom{x_2}{x_{1}}_q=\frac{|\GL(W_{k_i}^{(i)})|}{|\stab_{\GL(W_{k_i}^{(i)})}(\gamma_i)|}
	\end{equation}
	and
	\begin{equation}\label{eq_ProdCha3}
	m=|G:H|=\frac{|\GL(V)|}{|\stab_{\GL(W_{k_1}^{(1)})}(\gamma_1)|\cdot\ldots\cdot|\stab_{\GL(W_{k_r}^{(r)})}(\gamma_r)|}\,.
	\end{equation}
	So, by equations \eqref{eq_ProdCha2} and \eqref{eq_ProdCha3}, we obtain that
	\begin{equation*}
	m\geq \frac{|\GL(W_{k_1}^{(1)})|}{|\stab_{\GL(W_{k_1}^{(1)})}(\gamma_1)|}\cdot\ldots\cdot\frac{|\GL(W_{k_r}^{(r)})|}{|\stab_{\GL(W_{k_r}^{(r)})}(\gamma_r)|}=y_1\cdot\ldots\cdot y_r\,.
	\end{equation*}
	Hence, $\,y_1^4\cdot\ldots\cdot y_r^4\leq m^4\,$ is a bound on the number of possible choices for the $r$-tuples $(\gamma_1,\dots,\gamma_r)$ of chains, once $y_1,\dots,y_r$ are fixed as above. But $y_1\cdot\ldots\cdot y_r$ is a factorization of a divisor $d$ of $m$. By \cite{kalmar31}, there are at most $d^2$ such factorizations for each divisor $d$. Thus, we can choose at most $m^3$ factorizations $y_1\cdot\ldots\cdot y_r$ corresponding to the fixed decomposition $V=\bigoplus_{i=1}^r W_{k_i}^{(i)}$. 
	We conclude that
	\begin{equation*}
	|\mathfrak{X}_m|\leq m^5\cdot m^4\cdot m^3 = m^{12}\,.
	\end{equation*}
\end{proof}

\subsection*{A final remark}

Let $m\in\N$ and let $z(m)$ be the number of closed subgroups in $\GL(V)$ of index $m$ that are the closure of subgroups generated by a cyclic endomorphism. 
Theorem \ref{ClosedSbgrpsProductOfChains} allows us to conclude that there exists an absolute constant $\alpha$ such that $z(m)$ is bounded by a polynomial function $m^\alpha\,$. 

This suggests studying the problem of determining a similar estimate for the number of closed subgroups in $\GL(V)$ containing some cyclic endomorphism of $V$.
In fact,
Neumann and Praeger showed that almost all elements of $\GL(V)$ are cyclic endomorphisms of $V$, as stated in the following theorem (they use the term \emph{cyclic matrix}, as the matrix group $\GL(n,q)$ is considered).

\begin{thm}[Theorem 3.1 in \cite{neuprae00}]\label{prop_CyclicMatrix-Proportion}
	Let $\mathrm{Cyc}(n,q)$ be the set of all cyclic matrices in $\GL(n,q)$. Let 
	$$P(\mathrm{Cyc}(n,q))=\frac{|\mathrm{Cyc}(n,q)|}{|\GL(n,q)|}$$
	be the probability that a matrix in $\GL(n,q)$ is cyclic. Then, for all $n\geq2$ and prime powers $q$
	$$1-P(\mathrm{Cyc}(n,q))\leq \frac{1}{q(q^2-1)}\,.$$
\end{thm}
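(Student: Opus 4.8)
The plan is to count by conjugacy type, using that the proportion of $\GL(n,q)$ occupied by a single conjugacy class is the reciprocal of a centralizer order. First I would translate cyclicity into rational canonical form data. Via the primary decomposition of $V$ as an $\F_q[t]$-module, an element $A\in\GL(n,q)$ is described by assigning, to each monic irreducible $f\in\F_q[t]$ with $f\neq t$ and $d_f:=\deg f$, a partition $\lambda_f$, subject to $\sum_f d_f\,|\lambda_f|=n$; here $f\neq t$ because $A$ is invertible. By Definition \ref{CyclicMatrix_def1}, $A$ is a cyclic endomorphism exactly when $V$ is a cyclic $\F_q[t]$-module, i.e. when every $\lambda_f$ is a single-part partition. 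Thus the non-cyclic elements are precisely those for which some $\lambda_f$ has at least two parts, and $1-P(\mathrm{Cyc}(n,q))$ is the total proportion of such elements.

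Next I would use that the proportion of elements of a fixed type equals $1/|C_{\GL(n,q)}(A)|$ and that this centralizer order factors over the primary components as $\prod_f b_{\lambda_f}(q^{d_f})$, where $b_\lambda(Q)$ is the order of the automorphism group of $\bigoplus_i \F_q[t]/(f^{\lambda_i})$; in particular $b_{(k)}(Q)=Q^{k-1}(Q-1)$ for a single part and $b_{(1,1)}(Q)=|\GL(2,\F_Q)|=Q(Q-1)^2(Q+1)$. The bookkeeping is organised by the cycle-index identity
\[
\prod_{f\neq t}\ \sum_{\lambda}\frac{u^{\,d_f|\lambda|}}{b_\lambda(q^{d_f})}=\frac{1}{1-u},
\]
which simply records that the class proportions in $\GL(n,q)$ sum to $1$ for every $n$. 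Keeping in each local factor only the single-part partitions yields a product formula for $\sum_n P(\mathrm{Cyc}(n,q))\,u^n$, so $1-P(\mathrm{Cyc}(n,q))$ is the coefficient of $u^n$ in the difference with $1/(1-u)$. Since dropping one factor from a product of power series with non-negative coefficients and constant term $1$ only decreases coefficients, the proportion of matrices with $\lambda_f=\mu$ fixed (and arbitrary data elsewhere) is bounded above by $1/b_\mu(q^{d_f})$.

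The source of the bound is then visible. The minimal obstruction to cyclicity is $\mu=(1,1)$ carried by a linear polynomial $f=t-a$; invertibility forces $a\in\F_q^{\ast}$, so there are exactly $q-1$ such $f$, each contributing at most $b_{(1,1)}(q)^{-1}=\bigl(q(q-1)^2(q+1)\bigr)^{-1}$. Their combined contribution is
\[
(q-1)\cdot\frac{1}{q(q-1)^2(q+1)}=\frac{1}{q(q^2-1)},
\]
which is precisely the right-hand side of the theorem. This is the dominant family, and it already exhausts the claimed bound when the complementary data is counted freely.

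The main difficulty is exactly that this leading family already saturates $1/(q(q^2-1))$, so every further non-cyclic contribution -- partitions $\mu\neq(1,1)$ at linear polynomials, and all partitions carried by irreducibles of degree $d\geq2$ (there are at most $q^d/d$ of these, contributing through $b_\mu(q^d)^{-1}$ with $q^d$ large) -- is a strictly positive excess that a naive union bound cannot accommodate. The resolution must use the finite-dimensional correction: for fixed $n$ the coefficient $[u^{\,n-d_f|\mu|}]$ of the remaining product is genuinely smaller than $1$, and the resulting deficit in the $(1,1)$ term has to be shown to dominate the sum of all higher contributions. I would carry this out following the recursion of Neumann and Praeger \cite{neuprae00} on the complementary dimension $n-d_f|\mu|$, estimating $\sum_{d\geq1}(q^d/d)\sum_{\ell(\mu)\geq2} b_\mu(q^d)^{-1}$ against its geometric decay in $q$ and controlling the dependence on $n$, so that the supremum over all $n$ stays at or below $1/(q(q^2-1))$. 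This uniform control over $n$, rather than the identification of the leading term, is the real content and the expected obstacle.
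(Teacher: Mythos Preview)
The paper does not prove this theorem; it is quoted verbatim from Neumann and Praeger \cite{neuprae00} in the closing remark and is used only as motivation, with no argument supplied. There is therefore no proof in the paper to compare your proposal against.

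As for the sketch itself: your translation into rational-canonical-form data, the factorisation of centralizer orders, and the identification of the $(1,1)$ block at a linear factor as the extremal contribution are all correct, and you correctly observe that this family already attains $1/(q(q^2-1))$ exactly (equality holds at $n=2$, where the non-cyclic elements are precisely the $q-1$ scalars). You are also right that this is precisely where the work lies: a union bound over non-cyclic types overshoots, and one must show that for each $n\geq 3$ the shortfall in the $(1,1)$ contribution---coming from the fact that the complementary factor on $n-2$ dimensions contributes strictly less than $1$---absorbs all higher-order non-cyclic types. Your proposal names this obstacle but does not carry it out; what remains is exactly the inductive/generating-function estimate that Neumann and Praeger perform in \cite{neuprae00}, so at present your text is an accurate outline of their strategy rather than an independent proof.
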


So, we can think that many closed subgroups in $\GL(V)$ contain a cyclic endomorphism of $V$. For a subgroup $H$ containing a cyclic endomorphism $\xi$, a natural observation is that $\mathcal{S}(V,H)$ is a sublattice of $\mathcal{S}(V,\xi)$. Thus, the methods of the proof of Theorem \ref{ClosedSbgrpsProductOfChains} could be applied or extended to the case of sublattices of the product of chains $\mathcal{S}(V,\xi)$. 

This seems to be worthy of consideration: we could exploit again the interplay between subspaces and subgroups due to the closure operator on the subgroup lattice of $\GL(V)$, in order to give a more precise estimate on the number of closed subgroups in $\GL(V)$.

\bigskip
{\small 
	\textsc{Mathematisches Institut, Heinrich-Heine-Universit\"at  D\"usseldorf, \\  40225 D\"usseldorf, Germany}} 

{\small \textsl{E-mail address:} \texttt{luca.di.gravina@hhu.de}

\end{document}